\documentclass[12pt]{article}
\usepackage{mathtools}
\usepackage{amsmath}
\usepackage{amsfonts}
\usepackage{latexsym}
\usepackage{amssymb}
\usepackage{stmaryrd}
\usepackage{overpic}
\usepackage{graphicx}
\usepackage{multirow}
\usepackage{mathrsfs} 
\usepackage{float}  
\usepackage{color}
\usepackage{subcaption}
\newtheorem{theorem}{Theorem}[section]
\newtheorem{assumption}[theorem]{Assumption}
\newtheorem{corollary}[theorem]{Corollary}
\newtheorem{definition}[theorem]{Definition}
\newtheorem{example}[theorem]{Example}
\newtheorem{lemma}[theorem]{Lemma}

\newtheorem{proposition}[theorem]{Proposition}

\newenvironment{proof}[1][Proof]{\textbf{#1.} }
{\ \rule{0.75em}{0.75em}\smallskip}

\newcommand{\bx}{\boldsymbol{x}}
\newcommand{\by}{\boldsymbol{y}}

\begin{document}

\begin{center}
\large\bf Numerical Analysis of Stochastic Elliptic Variational Inequalities of the First Kind
\end{center}

\begin{center}
Chenhui Zhu\footnote{School of Mathematics and Statistics, Xi'an Jiaotong University,
Xi'an, Shaanxi 710049, P. R. China. Email: {chzhu@stu.xjtu.edu.cn}},\quad
Fei Wang\footnote{School of Mathematics and Statistics, Xi'an Jiaotong University,
Xi'an, Shaanxi 710049, P. R. China. The work of this author was partially
supported by the National Natural Science Foundation of China (Grant No.\ 12171383). Email: feiwang.xjtu@xjtu.edu.cn}, \quad
Weimin Han\footnote{Department of Mathematics, University of Iowa, Iowa City, IA 52242, USA. The work of this author was partially supported by Simons Foundation Collaboration Grants (Grant No.\ 850737). Email: weimin-han@uiowa.edu}

\end{center}
\bigskip
\begin{quote}
{\bf Abstract.} 
{This paper presents a numerical approach to the stochastic obstacle problem using the stochastic Galerkin (SG) method. Due to the low regularity of the solution, linear finite elements are employed in both the physical and random variable spaces.} Properties of random fields and variational inequalities of the first kind are employed to establish the well-posedness of the problem. Finite element spaces are introduced to construct suitable approximation subspaces, and a comprehensive SG formulation is proposed to solve the stochastic obstacle problem. Well-posedness of the discrete formulation is shown and an optimal error estimate for the numerical solution in the $H^1$-norm is derived. Numerical experiments validate the effectiveness of the SG method, showing that both the expectation error and second moment error converge at a rate of $O(h)$ in the $H^1$-norm, consistent with theoretical predictions.

{\bf Keywords.} Stochastic elliptic variational inequalities, obstacle problem, stochastic Galerkin method, a priori error estimate

{\bf AMS Classification.} 65K15, 65C20, 65N30, 65N12 

\end{quote}

\section{Introduction}

Variational inequalities (VIs) are a fundamental class of mathematical problems that extend the concepts of optimization and equilibrium. They are extensively applied in fields such as applied mathematics, physics, engineering, and economics to model systems involving constraints, inequalities, or non-smooth behavior (\cite{DL1976,KO1988,HR2013,Kinderlehrer2000,HS2002,EJK2005}). VIs offer a robust framework for addressing challenges in the presence of complex constraints or irregularities. {Since closed-form solutions are rarely available for VIs arising in practical applications, numerical methods are required to solve such problems. Various numerical approaches have been developed and studied to this end, including, notably, finite element methods~\cite{falk1974error,TLG1981,Glowinski1984,Hlavacek1988}, discontinuous Galerkin methods~\cite{wang2010discontinuous, wang2014discontinuous}, and virtual element methods~\cite{wang2018friction, wang2020obstacle, feng2019virtual}.}

In many complex physical and engineering models, significant uncertainties arise from various sources, including parameters, coefficients, forcing terms, and boundary conditions. These models are often described using stochastic partial differential equations (SPDEs). In recent years, significant attention has been given to the stochastic Galerkin method (\cite{Babu2001, Babuska2002,Babuska2004,Babuska2005,Shen2019}) and stochastic collocation methods (\cite{Babuska2007,Xiu2007}) as effective tools for uncertainty quantification and propagation in random PDEs.

The stochastic Galerkin method addresses uncertainty by expanding the solution in a polynomial basis with respect to random variables, thereby transforming the problem into a high-dimensional deterministic system. {Galerkin finite element techniques, including both $h$- and  $p$-versions~\cite{Babu2001, Babuska2005, Babuska2004}, have been effectively used to approximate statistical quantities of interest. In contrast, the stochastic collocation (SC) method (\cite{Babuska2007,Xiu2007}) offers a non-intrusive alternative based on evaluating the solution at carefully chosen points in the stochastic space and interpolating the result.}

Building on these methods, several theoretical investigations have been conducted to support their applicability in various contexts. For example,~\cite{Shen2019} presents a detailed analysis of the stochastic Galerkin method for optimal control problems governed by elliptic partial differential equations with random coefficients and state constraints. In more challenging settings, such as stochastic obstacle problems, the convergence behavior may degrade due to the limited regularity of the solution with respect to the random parameters. Consequently, polynomial chaos discretizations fail to achieve exponential convergence rates in the stochastic domain, thereby diminishing the efficiency of SG/SC methods. Nonetheless, if the coincidence set is only mildly sensitive to stochastic variations, satisfactory convergence behavior may still be observed. For instance,~\cite{Forster2010} reports promising numerical results for elliptic variational inequalities with random diffusion coefficients using a stochastic collocation method; however, a rigorous theoretical justification for the observed performance remains lacking.

One of the main challenges in numerically solving stochastic obstacle problems using stochastic collocation methods arises from the limited global regularity of the solution with respect to the random parameters. This lack of smoothness is a direct consequence of the complementarity condition, which introduces non-smooth dependence on the stochastic variables. {In contrast, Monte Carlo (MC) methods, which are widely used in statistical simulations, offer a flexible and non-intrusive approach that remains applicable even in the presence of low regularity.} For example, multilevel Monte Carlo finite element methods (MLMC-FEMs) have been applied to stochastic elliptic variational inequalities in~\cite{Kornhuber2014}, and a convergence theory for multilevel sample variance estimators was developed in~\cite{Bierig2015}. The overall error of such MC-based finite element methods is bounded by \(O(h)+O(M^{-1/2})\), where \(M\) denotes the number of samples. 

However, for stochastic obstacle problems, each sample requires the solution of a nonlinear variational inequality, which is significantly more expensive than solving a linear PDE. Consequently, the cost per sample in MC methods becomes substantially higher, making the slow convergence rate with respect to \(M\) even more problematic when high accuracy is desired. To overcome this bottleneck we adopt stochastic Galerkin finite element discretization, which replaces many independent solves by a single coupled deterministic variational inequality in a tensor-product space (\cite{Babu2001}). Once the SG system is solved, statistical quantities of the solution such as the expectation and the second moment are obtained by inexpensive post-processing of the SG coefficients. To the best of our knowledge, a rigorous numerical analysis of SG finite element discretizations for stochastic obstacle problems has been largely absent. This work fills that gap by establishing the well-posedness of both the continuous problem and its SG discretization, and by deriving a priori error estimates.

This paper addresses the stochastic obstacle problem using finite element methods, with a focus on theoretical analysis and numerical approximation. Due to the limited regularity of the solution, our framework is restricted to linear finite elements in both the spatial and stochastic domains. The structure of the paper is as follows. In Section \ref{sec2}, we introduce the stochastic obstacle problem in the form of a variational inequality of the first kind and explore its well-posedness. In Section \ref{sec3}, we construct finite element spaces, employing piecewise linear basis functions for both the physical and stochastic discretizations. Based on these spaces, in Section \ref{sec4}, we present the full finite element formulation, including the Galerkin approximation and providing a priori error analysis. Finally, in Section \ref{sec5}, we illustrate the performance of the proposed method through numerical experiments that confirm the theoretical convergence behavior across different stochastic configurations.

\section{Obstacle Problem with Random Coefficients}\label{sec2}
\setcounter{equation}0
\subsection{{Deterministic Obstacle Problem}}

First, we describe a deterministic obstacle problem (\cite{Ro1987,Atkinson2009Theoretical}). Let $D\subset \mathbb{R}^n$ be an open, bounded Lipschitz domain, the positive integer $n$ being the spatial dimension.
{The weak formulation of the obstacle problem is a variational inequality}: Find $u \in K$ such that 
\begin{align}
    \label{eq1}
 \int_{D} a \nabla u \cdot \nabla (v-u) \, \mathrm{d} \bx \geq \int_{D} f(v-u) \, \mathrm{d} \bx \quad \forall v \in K,
\end{align}
where \( f \in L^2(D) \),  \( a \in L^{\infty}(D) \)   and \( K \) is the set of admissible displacements, 
\begin{equation*}
    K = \{v \in H_0^1(D) \left| \right. v \geq g \text{ a.e. in } D\},
\end{equation*}
in which $g$ denotes the height function of the obstacle.
In the space $H^1_0(D)$, we use its canonical inner product
\[ (u,v)_{H^1_0(D)}=\int_D \nabla u\cdot\nabla v\, \mathrm{d}\bx \]
and the corresponding norm $\|v\|_{H^1_0(D)}=(v,v)_{H^1_0(D)}^{1/2}$.
Assuming the coefficient $a(\cdot)$ is bounded below by a constant $a_0 > 0$, we can conclude that the
obstacle problem has a unique solution by applying the following result
(\cite[page 3]{Glowinski1984}, \cite[Theorem 11.3.9]{Atkinson2009Theoretical}).

\begin{lemma}	\label{lem1.1}
Let $V$ be a real Hilbert space, and let $K \subset V$ be a non-empty, closed and convex set.
Assume $b(\cdot, \cdot) \colon V \times V \to \mathbb{R}$ is a continuous, $V$-elliptic bilinear form,
$ \ell \in V^\prime$. Then the following elliptic variational inequality
\begin{equation*}
u \in K, \quad	b(u, v-u) \geq \ell (v-u) \quad \forall v \in K
\end{equation*} 
has a unique solution.
\end{lemma}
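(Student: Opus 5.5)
This is the Lions--Stampacchia theorem for a possibly nonsymmetric form, and I plan to prove it by reducing the inequality to a fixed-point problem for a contraction built from the projection onto $K$.

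\textbf{Setup.} Let $M$ be the continuity constant and $\alpha>0$ the ellipticity constant of $b(\cdot,\cdot)$, so that
\[
|b(u,v)|\le M\|u\|_V\|v\|_V, \qquad b(v,v)\ge \alpha\|v\|_V^2 .
\]
By the Riesz representation theorem there is $f\in V$ with $\ell(v)=(f,v)_V$. For each fixed $u$ the map $v\mapsto b(u,v)$ is a bounded linear functional, so there is a bounded linear operator $A\colon V\to V$ with $(Au,v)_V=b(u,v)$ and $\|A\|\le M$. The inequality then reads
\[
u\in K,\qquad (Au-f,\,v-u)_V\ge 0 \quad \forall v\in K .
\]

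\textbf{Key tool.} Since $K$ is non-empty, closed and convex, the projection theorem gives a unique nearest-point projection $P_K\colon V\to K$. It is characterized by
\[
z=P_Kw \iff z\in K \text{ and } (w-z,\,v-z)_V\le 0 \ \ \forall v\in K .
\]
It is also nonexpansive:
\[
\|P_Kw_1-P_Kw_2\|_V\le \|w_1-w_2\|_V .
\]
Apply the characterization with $w=u-\rho(Au-f)$ for any $\rho>0$. It shows that $u$ solves the inequality if and only if
\[
u=T_\rho u, \qquad T_\rho u:=P_K\bigl(u-\rho(Au-f)\bigr).
\]

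\textbf{Contraction estimate.} This is the only step needing care. Expanding the square and using ellipticity and continuity,
\[
\|(u_1-u_2)-\rho A(u_1-u_2)\|_V^2\le \bigl(1-2\rho\alpha+\rho^2M^2\bigr)\|u_1-u_2\|_V^2 .
\]
Combined with nonexpansiveness of $P_K$, this shows $T_\rho$ is a contraction on the closed set $K$ whenever $0<\rho<2\alpha/M^2$. A convenient choice is $\rho=\alpha/M^2$, which gives contraction factor $(1-\alpha^2/M^2)^{1/2}<1$.

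\textbf{Conclusion.} $K$ is a closed subset of a Hilbert space, hence complete. The Banach fixed-point theorem therefore gives exactly one fixed point of $T_\rho$ in $K$, which is existence and uniqueness of the solution of the inequality.

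\textbf{Direct uniqueness check.} Suppose $u_1,u_2$ are two solutions. Test the inequality for $u_1$ with $v=u_2$, and the inequality for $u_2$ with $v=u_1$. Adding the two gives $b(u_1-u_2,u_1-u_2)\le 0$, so ellipticity forces $u_1=u_2$.

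I expect no real obstacle. The only subtle point is that $b$ need not be symmetric, so the problem cannot be reduced to minimizing an energy functional; the projection/contraction argument above avoids that issue.
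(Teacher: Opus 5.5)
Your proof is correct. The paper does not prove this lemma; it cites it from Glowinski and from Atkinson--Han, and the standard Lions--Stampacchia proof given there is essentially your argument: reduce to the fixed-point equation $u=P_K\bigl(u-\rho(Au-f)\bigr)$, then show the map is a contraction with factor $(1-2\rho\alpha+\rho^2M^2)^{1/2}<1$ for $0<\rho<2\alpha/M^2$.
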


{While the deterministic formulation assumes precise knowledge of the input data, real-world scenarios often involve uncertainties in the coefficient \(a\), the source term \(f\) and height function \(g\) of the obstacle. These uncertainties will be described by randomness of the data \(a\), \(f\) and \(g\).}

\subsection{{ Stochastic Framework and Functional Setting}}
{We begin by briefly recalling some standard definitions related to random variables and random fields; cf. e.g.,~\cite{Durrett2019Probability,Lord2014introduction,Oksendal2013} for comprehensive introductions.}  Let \((\Omega, \mathcal{F}, \mathbb{P})\) be a probability space, where \(\Omega\) is the set of elementary events, \(\mathcal{F}\) is a \(\sigma\)-algebra rich enough to support all the random variables we consider, and \(\mathbb{P}\) is a probability measure on $\mathcal{F}$. For any \( Y \in L^1(\Omega) \), the expected value is defined by
\[
\mathbb{E}[Y] = \int_{\Omega} Y(\omega) \, \mathrm{d}\mathbb{P}(\omega) = \int_{\mathbb{R}} y \, \mathrm{d}\mu_Y(y),
\]
where \( \mu_Y \) denotes the distribution measure of \( Y \), defined by \( \mu_Y(G) = \mathbb{P}(Y^{-1}(G)) \) for any Borel set \( G \in \mathfrak{B}(\mathbb{R}) \),  and $\mathfrak{B} (\mathbb{R})$ is the Borel $\sigma$-algebra generated by the open subsets of $\mathbb{R}$. If \( \mu_Y \) is absolutely continuous with respect to the Lebesgue measure, then \( Y \) admits a probability density function \( \rho_Y \) such that \( \mathbb{E}[Y] = \int_{\mathbb{R}} y \rho_Y(y) \, \mathrm{d}y \).

Let \( X \) be a Banach space of real-valued functions defined on a domain \( D \subset \mathbb{R}^n \), equipped with a norm \( \|\cdot\|_X \).  
{A real-valued random field \( v\colon D \times \Omega \to \mathbb{R} \) is called an \( X \)-valued random variable if it is measurable with respect to the product \( \sigma \)-algebra \( \mathfrak{B}(D) \otimes \mathcal{F} \).  It belongs to the Bochner space \( L^p(\Omega; X) \), for some \( 1 \leq p < \infty \), if
\[
\|v\|_{L^{p}(\Omega; X)}^{p} \coloneqq \int_{\Omega} \|v(\cdot, \omega)\|_X^p \, \mathrm{d}\mathbb{P}(\omega) < \infty.
\]
For \( p = \infty \), the Banach space \( L^{\infty}(\Omega; X) \) is equipped with the essential supremum norm:
\[
\|v\|_{L^{\infty}(\Omega; X)} \coloneqq \operatorname*{ess\,sup}_{\omega \in \Omega} \|v(\cdot, \omega)\|_X.
\]
In particular, if \( X = L^2(D) \), then \( v \in L^2(\Omega; L^2(D)) \) implies}
\[
\|v\|_{L^2(\Omega; L^2(D))}^{2} = \mathbb{E}\left[ \|v(\cdot, \omega)\|_{L^2(D)}^2 \right] = \int_{\Omega} \int_D |v(\bx, \omega)|^2 \, \mathrm{d}\bx \, \mathrm{d}\mathbb{P}(\omega) < \infty.
\]

Given the distinct structural properties of stochastic functions with respect to \(\omega\) and \(\bx\), their numerical approximations require the use of tensor spaces. 
Let \( H_1 \) be a Hilbert space of functions defined on the physical domain \( D \subset \mathbb{R}^n \), and \( H_2 \) a Hilbert space of functions defined on the parameter domain \( \Gamma \subset \mathbb{R}^M \). The tensor space \(H_1 \otimes H_2\) is defined as the completion of elements of the form \(u(\bx,\by) = \sum_{i=1}^d v_i(\bx) w_i(\by)\), where \(\{v_i\} \subset H_1\) and \(\{w_i\} \subset H_2\), with respect to the norm $\|\cdot\|_{H_1 \otimes H_2}$ induced by the inner product 
\[
(u, \hat{u})_{H_1 \otimes H_2} = \sum_{i,j} (v_i, \hat{v}_ j)_{H_1} (w_i, \hat{w}_j)_{H_2}
\]
for $u(\bx,\by) = \sum_{i=1}^d v_i(\bx) w_i(\by)$ and $\hat{u}(\bx,\by) = \sum_{i=1}^d \hat{v}_i(\bx) \hat{w}_i(\by)$. For domains \(D\) and \(\Gamma\), the tensor space \(H^1(D) \otimes  L^2(\Gamma) \) is equipped with the inner product
\[
(u, \hat{u})_{H^1(D) \otimes L^2(\Gamma)} = \int_{\Gamma} \int_{D} \left( u(\bx,\by) \hat{u}(\bx,\by) + \nabla_{\bx} u(\bx,\by) \cdot \nabla_{\bx} \hat{u}(\bx,\by) \right) \, \mathrm{d}\bx \, \mathrm{d}\by.
\]
Thus, if \(u \in  H^k(D)  \otimes L^2(\Gamma) \), it follows that \(u(\cdot, \by) \in H^k(D)\) for almost every  \( \by \in \Gamma\), and for almost every \(\bx \in D\), \(u(\bx, \cdot) \in L^2(\Gamma)\). Moreover, there exists an isomorphism \( H^k(D) \otimes  L^2(\Gamma)  \simeq L^2(\Gamma; H^k(D)) \simeq H^k(D; L^2(\Gamma)) \) (\cite{Babuska2004}).

With the above notations, we now consider the stochastic obstacle problem: Given \(a \in L^{\infty}\left(\Omega; L^{\infty}(D)\right)\), \( f \in L^2(\Omega; L^2(D))\), \(g \in L^{2}\left(\Omega; H^{1}(D)\right)\), find a function $u$ such that for almost every \( \omega \in \Omega \), \( u(\cdot, \omega) \in K_{\omega} \) and the following inequality holds:
{\small \begin{equation}
	\label{eq2}
	\int_{D} a(\bx, \omega) \nabla u({\boldsymbol x}, \omega) \cdot \nabla (v({\boldsymbol x})-u({\boldsymbol x}, \omega)) \, \mathrm{d}\bx \geq \int_{D} f(\bx, \omega)(v({\boldsymbol x})-u({\boldsymbol x}, \omega)) \, \mathrm{d}\bx \quad\forall\,v \in K_{\omega},
\end{equation}} where \(K_{\omega} = \{v(\cdot, \omega) \in H_0^1(D) \mid v(\cdot, \omega) \geq g(\cdot, \omega) \text{ a.e. in } D\}\). {The gradient operator \(\nabla\) refers to differentiation with respect to $\bx \in D$ unless otherwise stated.} The corresponding pointwise formulation is to find a random function $u(\bx,\omega)$ such that (cf.\ \cite[Example 11.1.1]{Atkinson2009Theoretical})
\begin{align}
	\label{eqe3}
	-\text{div}(a(\bx,\omega)\nabla u(\bx,\omega)) &\geq f(\bx,\omega) &\text{ in } D, \notag \\
	u(\bx,\omega) &\geq g(\bx, \omega)  &\text{ in } D, \notag \\
		(\text{div} (a(\bx,\omega)\nabla u(\bx,\omega))+f(\bx,\omega))(u(\bx,\omega)-g(\bx, \omega)) & = 0 & \text{ in } D,\\
		u(\bx,\omega) &= 0 &\text{ on } \partial D. \notag 
\end{align}

{ To ensure the compatibility of the obstacle with the homogeneous Dirichlet boundary condition, we assume that  
\(g(\bx,\omega) \leq 0 \,\, \text{for a.e. } \bx \in \partial D, \; \omega \in \Omega.\)
}

The existence and uniqueness of $u(\cdot, \omega)$ are ensured by Lemma~\ref{lem1.1} provided that $ 0< a_{\min}(\omega)\leqslant  a(\bx, \omega)\leqslant a_{\max}(\omega)$. To establish well-posedness $\mathbb{P}$-almost surely, we introduce the following assumptions (\cite{Kornhuber2014}):
\begin{assumption}\label{2.1}
	For almost all $\omega\in \Omega$, 
	\begin{align*}
	& a_{\rm min}(\omega) \coloneqq \underset{{\bx} \in D}{\operatorname{ess} \inf} \, a({\bx}, \omega)>0,\\
	& a_{\max }(\omega) \coloneqq \underset{{\bx} \in D}{\operatorname{ess} \sup }\, a({\bx}, \omega)<\infty.
	\end{align*}
\end{assumption}

\begin{assumption}\label{2.2}
There exist two constants  $a_{\min }  $ and $  a_{\max }$ such that 
\[0<a_{\min } \leq a({\bx}, \omega) \leq a_{\max }<\infty, \quad \text { a.e. in } D \times \Omega.\]
\end{assumption}

Obviously,  Assumption~\ref{2.2} is stronger than Assumption~\ref{2.1}.

\subsection{Well-posedness of the stochastic obstacle problem}

Let $ V = L^2(\Omega; H_0^1(D))$, equipped with the inner product \((v,w)_{V} = \mathbb{E}[(\nabla v, \nabla w)_{L^2(D)}]\) and the corresponding norm $\|v\|_V=(v,v)_V^{1/2}$ for \(v,w \in V\). We consider the following stochastic variational inequality (\cite{Bierig2015,Forster2010}):
\begin{equation}
	\label{eq5}
u \in \mathcal{K}, \quad	b(u, v-u) \geq \ell(v-u), \quad \forall v \in \mathcal{K},
\end{equation}
where $ b(\cdot, \cdot)\colon V \times V \to \mathbb{R}$,  $\ell\colon V \to \mathbb{R}$ and $\mathcal{K}$
 are defined as follows:
\begin{align}
b(u, v) & \coloneqq \mathbb{E}\left[\int_{D} a(\boldsymbol{x}, \omega) \nabla u(\boldsymbol{x},  \omega) \cdot \nabla v(\boldsymbol{x},  \omega) \, \mathrm{d} \boldsymbol{x}\right], \label{eq4} \\
\ell(v) & \coloneqq \mathbb{E}\left[\int_{D} f(\boldsymbol{x},  \omega) v(\boldsymbol{x}, \omega)\, \mathrm{d} \boldsymbol{x}\right], \\
\mathcal{K} & \coloneqq \{ v \in V \left| \right. v(\bx,\omega) \geq g(\bx, \omega)\ { {\rm a.e.\ in\ } D \times \Omega} \}. \notag
\end{align}

\begin{theorem}\label{thm1}
Under Assumption~\ref{2.2}, for any \(f \in L^{2}\left(\Omega; L^{2}(D)\right)\), the problem~\eqref{eq5} admits a unique solution.
 \end{theorem}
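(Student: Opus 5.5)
We will deduce Theorem~\ref{thm1} directly from Lemma~\ref{lem1.1}, applied with the Hilbert space $V=L^2(\Omega;H_0^1(D))$, the bilinear form $b(\cdot,\cdot)$ in \eqref{eq4}, the functional $\ell$, and the set $\mathcal{K}$. We therefore check the hypotheses of Lemma~\ref{lem1.1} one at a time.

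First, $V$ is a Hilbert space: it is the Bochner space of square-integrable functions with values in the separable Hilbert space $H_0^1(D)$, and $(\cdot,\cdot)_V$ is its inner product.

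Continuity and $V$-ellipticity of $b$ follow from Assumption~\ref{2.2} and Cauchy--Schwarz, applied first in $D$ and then in $\Omega$:
\[
|b(u,v)|\le a_{\max}\,\mathbb{E}\bigl[\|\nabla u\|_{L^2(D)}\|\nabla v\|_{L^2(D)}\bigr]\le a_{\max}\|u\|_V\|v\|_V,\qquad b(v,v)\ge a_{\min}\|v\|_V^2 .
\]
For $\ell$, Cauchy--Schwarz together with the Poincar\'e inequality $\|v(\cdot,\omega)\|_{L^2(D)}\le C_P\|\nabla v(\cdot,\omega)\|_{L^2(D)}$ gives
\[
|\ell(v)|\le C_P\|f\|_{L^2(\Omega;L^2(D))}\|v\|_V ,
\]
so $\ell\in V'$. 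The Poincar\'e constant depends only on $D$, so the bound holds uniformly in $\omega$.

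Convexity of $\mathcal{K}$ is immediate, since the pointwise constraint $v\ge g$ is preserved under convex combinations. For closedness, take $v_k\in\mathcal{K}$ with $v_k\to v$ in $V$. By Poincar\'e, $v_k\to v$ in $L^2(\Omega;L^2(D))\simeq L^2(D\times\Omega)$. A subsequence then converges a.e.\ in $D\times\Omega$, so $v\ge g$ a.e.\ and $v\in\mathcal{K}$.

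The main obstacle is showing that $\mathcal{K}$ is non-empty, which requires an element $v\in V$ with $v\ge g$. My candidate is $v=g^+=\max\{g,0\}$. For almost every $\omega$ we have $g(\cdot,\omega)\in H^1(D)$, hence $g^+(\cdot,\omega)\in H^1(D)$ with $\|g^+\|_{H^1}\le\|g\|_{H^1}$ and $\nabla g^+=\chi_{\{g>0\}}\nabla g$. The compatibility assumption $g\le0$ on $\partial D$ makes the trace of $g^+(\cdot,\omega)$ vanish, so $g^+(\cdot,\omega)\in H_0^1(D)$.

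It remains to check that $\omega\mapsto g^+(\cdot,\omega)$ is measurable as an $H_0^1$-valued map. This holds because $w\mapsto w^+$ is continuous on $H^1(D)$, and it is the step needing the most care. We then have $\|g^+\|_V\le\|g\|_{L^2(\Omega;H^1(D))}<\infty$, so $g^+\in\mathcal{K}$.

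With every hypothesis verified, Lemma~\ref{lem1.1} yields existence and uniqueness of the solution of \eqref{eq5}.
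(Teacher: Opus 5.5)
Your proposal is correct and follows the same route as the paper: verify the hypotheses of Lemma~\ref{lem1.1} on $V=L^2(\Omega;H_0^1(D))$, with boundedness and $V$-ellipticity of $b$ from Assumption~\ref{2.2} and Cauchy--Schwarz, and $\ell\in V'$ via the Poincar\'e inequality. The paper stops there and does not check that $\mathcal{K}$ is non-empty, closed and convex; your verification of these properties, in particular $g^+\in\mathcal{K}$ via the compatibility condition $g\le 0$ on $\partial D$, fills a step the paper leaves implicit.
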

\begin{proof}
To prove the existence and uniqueness of the solution to~\eqref{eq5}, we show that the bilinear form is bounded and $V$-elliptic.
Applying the Cauchy-Schwarz inequality, we get
\begin{align*}
b(u, v) &= \int_{\Omega}\int_D a\,\nabla u \cdot \nabla v \, \mathrm{d}\bx \, \mathrm{d} \mathbb{P} (\omega) \\
				& \leq a_{\max} \int_{\Omega}\int_D |\nabla u \cdot \nabla v|\, \mathrm{d}\bx \,\mathrm{d}\mathbb{P} (\omega)\\
				&\leq a_{\max}\lVert \nabla u \rVert _ {L^2(\Omega; L^2(D))}  \lVert \nabla v  \rVert _ {L^2(\Omega; L^2(D))} \\
		 &= a_{\max}  \lVert  u \rVert _ V  \lVert  v  \rVert _ V.
	\end{align*}
By Assumption~\ref{2.2}, we have 
\begin{align*}
	b(v, v) &= \int_{\Omega}\int_D a\,\nabla v \cdot \nabla v \,\mathrm{d}\bx \,\mathrm{d} \mathbb{P}(\omega) \\
			& \geq a_{\min} \int_{\Omega}\int_D \lvert \nabla v \rvert ^2 \, \mathrm{d}\bx \, \mathrm{d}\mathbb{P} (\omega) = a_{\min} \lVert  v \rVert _{L^2(\Omega; H_0^1(D))}^2. 
\end{align*}

It is readily verified that \(f \in L^{2}\left(\Omega; L^{2}(D)\right)\) implies the continuity of the linear form $\ell$
on $V$.  By applying Lemma~\ref{lem1.1}, we conclude that~\eqref{eq5} admits a unique solution.
\end{proof}

Since the diffusion coefficient \(a\), the source term \(f\) and obstacle function \(g\) are typically not known exactly 
in applications, it is useful to consider approximations for \(a\), \(f\), and \(g\). 
We now examine a perturbation of the weak formulation (\ref{eq5}) 
and analyze the magnitude of the resulting perturbation in the solution.
Replacing  $a$, $f$, and $g$ in (\ref{eq5})  by their approximations $\widetilde{a}, \widetilde{f}, \widetilde{g}\colon D \times \Omega \to  \mathbb{R}$  leads to a perturbed stochastic variational inequality: 
\begin{align}
	\label{eq8}
\widetilde{u} \in \widetilde{\mathcal{K}},\quad	\widetilde b(\widetilde{u}, v-\widetilde{u}) \geq  \widetilde \ell(v-\widetilde{u}), \quad \forall v \in \widetilde{\mathcal{K}},
\end{align}
where  $\widetilde{b}\colon V \times V \to \mathbb{R}$  and
  $\widetilde{\ell}\colon V \to \mathbb{R}$  are defined by
\[
	\begin{aligned}
	\widetilde{b}(u, v) &  \coloneqq \mathbb{E}\left[\int_{D} \widetilde{a}(\boldsymbol{x}, \omega) \nabla u(\boldsymbol{x}, \omega) \cdot \nabla v(\boldsymbol{x}, \omega)\,\mathrm{d} \boldsymbol{x}\right], \\
		\widetilde{\ell}(v) &  \coloneqq \mathbb{E}\left[\int_{D} \widetilde{f}(\boldsymbol{x}, \omega) v(\boldsymbol{x}, \omega)\,\mathrm{d} \boldsymbol{x}\right],\\
		\widetilde{\mathcal{K}} & \coloneqq \{ v \in V \left| \right. v(\bx,\omega) \geq {\widetilde{g}(\bx, \omega)} \ { {\rm a.e.\ in\ } D \times \Omega} \}.\notag
		\end{aligned}
\]

As demonstrated in~\cite{Babuska2004}, we employ a general framework to establish the well-posedness of problem (\ref{eq8}).

{
\begin{proposition}\label{prop2.2}
Let $H$ be a Hilbert space. Let $\mathcal{K},\widetilde{\mathcal{K}}\subset H$ be non-empty, closed, convex sets. 
Consider symmetric bilinear forms $\mathcal{B}, \widetilde{\mathcal{B}}\colon H \times H \to \mathbb{R}$ that are $H$-coercive and bounded: there exist { $0<\alpha_{\min} \le \alpha_{\max}$} such that
\[
	\alpha_{\min}\|v\|_H^2 \le \min\{\mathcal{B}(v,v),\widetilde{\mathcal{B}}(v,v)\}, 
\qquad
\max\{|\mathcal{B}(v,w)|,|\widetilde{\mathcal{B}}(v,w)|\}\le \alpha_{\max}\|v\|_H\|w\|_H
\]
for all $v, w \in H$. Assume moreover that $\mathcal{B}$ and $\widetilde{\mathcal{B}}$ are comparable: there exists $\gamma\ge0$ such that
\begin{equation}\label{eq2.8}
\big|\mathcal{B}(v,w)-\widetilde{\mathcal{B}}(v,w)\big|\le \gamma\,\|v\|_H\|w\|_H
\qquad\forall\,v, w \in H.
\end{equation}
Given $\mathcal{L},\widetilde{\mathcal{L}} \in H'$, let $u \in \mathcal{K},\widetilde{u} \in \widetilde{\mathcal{K}}$ solve
\begin{equation}\label{eq2.9}
\mathcal{B}(u,v-u)\ge \mathcal{L}(v-u)\quad\forall\,v \in \mathcal{K},
\end{equation}
\begin{equation}\label{eq2.9t}
\widetilde{\mathcal{B}}(\widetilde{u},v-\widetilde{u})\ge \widetilde{\mathcal{L}}(v-\widetilde{u})\quad\forall\,v \in \widetilde{\mathcal{K}}.
\end{equation}
Denote the  distances
\[
\delta_{\mathcal{K}} \coloneqq \inf_{v \in \mathcal{K}}\|\,\widetilde{u}-v\,\|_H,\qquad
\delta_{\widetilde{\mathcal{K}}} \coloneqq \inf_{v \in \widetilde{\mathcal{K}}}\|\,u-v\,\|_H,
\]
and set $e \coloneqq u-\widetilde{u}$. Then
{\small \begin{equation}\label{eq2.11}
\|e\|_H \leq  
\frac{1}{\alpha_{\min}} \left(\|\widetilde{\mathcal{L}}-\mathcal{L}\|_{H'}+\gamma\|u\|_H\right) +
\sqrt{\frac{2}{\alpha_{\min}}\!\left(\big(\alpha_{\max}\|u\|_H+\|\mathcal{L}\|_{H'}\big)\delta_{\mathcal{K}}
+\big(\alpha_{\max}\|\widetilde{u}\|_H+\|\widetilde{\mathcal{L}}\|_{H'}\big)\delta_{\widetilde{\mathcal{K}}}\right)}.
\end{equation}}
In particular, if $\mathcal{K} = \widetilde{\mathcal{K}}$, then $\delta_{\mathcal{K}} = \delta_{\widetilde{\mathcal{K}}} = 0$ and
\begin{equation}\label{eq:nogap}
\|u-\widetilde{u}\|_H\leq \frac{1}{\alpha_{\min}}\Big(\|\widetilde{\mathcal{L}}-\mathcal{L}\|_{H'}+\gamma\|u\|_H\Big).
\end{equation}
Moreover, if $\mathcal{K}$ and $\widetilde{\mathcal{K}}$ are convex cones with vertex at $0$, then
\begin{equation*}
\|u\|_H\leq \frac{1}{\alpha_{\min}}\, \|\mathcal{L}\|_{H'},\qquad
\|\widetilde{u}\|_H\leq \frac{1}{\alpha_{\min}}\, \|\widetilde{\mathcal{L}}\|_{H'},
\end{equation*}
and consequently
{\small \begin{equation}\label{eq2.13}
\|u-\widetilde{u}\|_H
\leq    \frac{1}{\alpha_{\min}}\!\left(\|\widetilde{\mathcal{L}}-\mathcal{L}\|_{H'}+\frac{\gamma}{\alpha_{\min}}\|\mathcal{L}\|_{H'}\right) +
\sqrt{\frac{2}{\alpha_{\min}}\! \Big(1+\frac{\alpha_{\max}}{\alpha_{\min}}\Big)\big(\|\mathcal{L}\|_{H'}\,\delta_{\mathcal{K}}
+\|\widetilde{\mathcal{L}}\|_{H'}\,\delta_{\widetilde{\mathcal{K}}}\big)}.
\end{equation}}
\end{proposition}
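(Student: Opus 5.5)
The plan is a Falk-type argument: test each inequality with a point near the other solution, add the two, and use coercivity. Let $v^\ast\in\mathcal{K}$ be the $H$-projection of $\widetilde u$ onto $\mathcal{K}$, so $\|\widetilde u-v^\ast\|_H=\delta_{\mathcal{K}}$. Let $\widetilde v^\ast\in\widetilde{\mathcal{K}}$ be the projection of $u$, so $\|u-\widetilde v^\ast\|_H=\delta_{\widetilde{\mathcal{K}}}$; both infima are attained since the sets are closed and convex.

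The first step is to test \eqref{eq2.9} with $v=v^\ast$ and \eqref{eq2.9t} with $v=\widetilde v^\ast$. Writing $v^\ast-u=-e+(v^\ast-\widetilde u)$ and $\widetilde v^\ast-\widetilde u=e+(\widetilde v^\ast-u)$, we get
\[
\mathcal{B}(u,e)\le \mathcal{L}(e)+\big(\mathcal{B}(u,v^\ast-\widetilde u)-\mathcal{L}(v^\ast-\widetilde u)\big),
\]
\[
-\widetilde{\mathcal{B}}(\widetilde u,e)\le -\widetilde{\mathcal{L}}(e)+\big(\widetilde{\mathcal{B}}(\widetilde u,\widetilde v^\ast-u)-\widetilde{\mathcal{L}}(\widetilde v^\ast-u)\big).
\]
Adding the two gives $\mathcal{B}(u,e)-\widetilde{\mathcal{B}}(\widetilde u,e)\le(\mathcal{L}-\widetilde{\mathcal{L}})(e)+R$. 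By boundedness, the remainder satisfies $R\le(\alpha_{\max}\|u\|_H+\|\mathcal{L}\|_{H'})\delta_{\mathcal{K}}+(\alpha_{\max}\|\widetilde u\|_H+\|\widetilde{\mathcal{L}}\|_{H'})\delta_{\widetilde{\mathcal{K}}}$. Next, split
\[
\mathcal{B}(u,e)-\widetilde{\mathcal{B}}(\widetilde u,e)=\widetilde{\mathcal{B}}(e,e)+(\mathcal{B}-\widetilde{\mathcal{B}})(u,e)\ge\alpha_{\min}\|e\|_H^2-\gamma\|u\|_H\|e\|_H .
\]
This yields the scalar inequality
\[
\alpha_{\min}\|e\|_H^2\le\big(\|\widetilde{\mathcal{L}}-\mathcal{L}\|_{H'}+\gamma\|u\|_H\big)\|e\|_H+R .
\]

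The remaining step is elementary but is where the constants must match \eqref{eq2.11}. Set $x=\|e\|_H$ and $A=\big(\|\widetilde{\mathcal{L}}-\mathcal{L}\|_{H'}+\gamma\|u\|_H\big)/\alpha_{\min}$. The inequality reads $x^2\le Ax+R/\alpha_{\min}$, so $x\le\tfrac12\big(A+\sqrt{A^2+4R/\alpha_{\min}}\big)\le A+\sqrt{R/\alpha_{\min}}$. This is even sharper than the factor $2$ in \eqref{eq2.11}; alternatively one can use Young's inequality $Ax\le \tfrac12A^2+\tfrac12x^2$, but the root bound suffices. When $\mathcal{K}=\widetilde{\mathcal{K}}$ we have $R=0$, which gives \eqref{eq:nogap}.

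For the cone case, take $v=0$ and $v=2u$ in \eqref{eq2.9}; both are admissible because $\mathcal{K}$ is a cone with vertex $0$. This gives $\mathcal{B}(u,u)=\mathcal{L}(u)$, hence $\alpha_{\min}\|u\|_H^2\le\|\mathcal{L}\|_{H'}\|u\|_H$, and the same argument applies to $\widetilde u$. Substituting these bounds into \eqref{eq2.11} gives \eqref{eq2.13}, since
\[
\alpha_{\max}\|u\|_H+\|\mathcal{L}\|_{H'}\le(1+\alpha_{\max}/\alpha_{\min})\|\mathcal{L}\|_{H'}.
\]

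The main care point is the algebraic split: coercivity should be applied to $\widetilde{\mathcal{B}}(e,e)$ with the perturbation term placed on $u$, so that $\gamma\|u\|_H$ (not $\gamma\|\widetilde u\|_H$) appears, as in the statement. Symmetry of the forms is not actually needed.
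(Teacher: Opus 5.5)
Your proof is correct and is essentially the paper's argument. It uses the same projections as test functions, applies coercivity to $\widetilde{\mathcal{B}}(e,e)$ with the perturbation $(\mathcal{B}-\widetilde{\mathcal{B}})(u,e)$ placed on $u$, bounds the remainders the same way, and handles the cone case with the same $v=0$, $v=2u$ trick. The only difference is the final scalar step: you solve the quadratic inequality directly where the paper uses Young's inequality with parameter $\alpha_{\min}/2$, which gives the slightly sharper term $\sqrt{R/\alpha_{\min}}$ in place of $\sqrt{2R/\alpha_{\min}}$ and so still implies \eqref{eq2.11}.
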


\begin{proof}
Applying Lemma~\ref{lem1.1}, we know that \(u\) and \(\widetilde{u}\) are well defined.
Denote $e \coloneqq u-\widetilde{u}$. Choose $\widetilde{u}_{\mathcal{K}} \in \mathcal{K}$ and $u_{\widetilde{\mathcal{K}}} \in \widetilde{\mathcal{K}}$ such that
$\|\widetilde{u}_{\mathcal{K}}-\widetilde{u}\|_H=\delta_{\mathcal{K}}$ and $\|u-u_{\widetilde{\mathcal{K}}}\|_H=\delta_{\widetilde{\mathcal{K}}}$ (cf. \cite[Section 3.4]{Atkinson2009Theoretical}).
Using~\eqref{eq2.9} with $v=\widetilde{u}_{\mathcal{K}}$ and~\eqref{eq2.9t} with $v=u_{\widetilde{\mathcal{K}}}$ yields
\[
\mathcal{B}\big(u,\widetilde{u}_{\mathcal{K}}-u\big)\ge \mathcal{L}\big(\widetilde{u}_{\mathcal{K}}-u\big),\qquad
\widetilde{\mathcal{B}}\big(\widetilde{u},u_{\widetilde{\mathcal{K}}}-\widetilde{u}\big)\ge \widetilde{\mathcal{L}}\big(u_{\widetilde{\mathcal{K}}}-\widetilde{u}\big).
\]
By coercivity of $\widetilde{\mathcal{B}}$,
\[
	\alpha_{\min}\|e\|_H^2\le \widetilde{\mathcal{B}}(e,e)=\widetilde{\mathcal{B}}(u,e)-\widetilde{\mathcal{B}}(\widetilde{u},e).
\]
Insert and subtract $\mathcal{B}(u,e)$, and rewrite
$\widetilde{u}_{\mathcal{K}}-u=(\widetilde{u}_{\mathcal{K}}-\widetilde{u})-e$ and
$u_{\widetilde{\mathcal{K}}}-\widetilde{u}=e-(u-u_{\widetilde{\mathcal{K}}})$ to obtain
\[
\begin{aligned}
\alpha_{\min}\|e\|_H^2
&\le \big(\widetilde{\mathcal{B}}(u,e)-\mathcal{B}(u,e)\big)+\big(\mathcal{L}(e)-\widetilde{\mathcal{L}}(e)\big) \\
&\quad +\Big(\mathcal{B}\big(u,\widetilde{u}_{\mathcal{K}}-\widetilde{u}\big)-\mathcal{L}\big(\widetilde{u}_{\mathcal{K}}-\widetilde{u}\big)\Big)
- \Big(\widetilde{\mathcal{B}}\big(\widetilde{u},\,u-u_{\widetilde{\mathcal{K}}}\big)-\widetilde{\mathcal{L}}\big(u-u_{\widetilde{\mathcal{K}}}\big)\Big).
\end{aligned}
\]
By~\eqref{eq2.8} and boundedness,
\[
\begin{aligned}
\big|\widetilde{\mathcal{B}}(u,e)-\mathcal{B}(u,e)\big| &\leq \gamma\|u\|_H\|e\|_H,\\
\big|\mathcal{L}(e)-\widetilde{\mathcal{L}}(e)\big| &\leq \|\widetilde{\mathcal{L}}-\mathcal{L}\|_{H'}\|e\|_H,\\
\big|\mathcal{B}(u,\widetilde{u}_{\mathcal{K}}-\widetilde{u})-\mathcal{L}(\widetilde{u}_{\mathcal{K}}-\widetilde{u})\big|
&\leq \big(\alpha_{\max}\|u\|_H+\|\mathcal{L}\|_{H'}\big)\,\|\widetilde{u}_{\mathcal{K}}-\widetilde{u}\|_H
=\big(\alpha_{\max}\|u\|_H+\|\mathcal{L}\|_{H'}\big)\delta_{\mathcal{K}},\\
\big|\widetilde{\mathcal{B}}(\widetilde{u},u-u_{\widetilde{\mathcal{K}}})-\widetilde{\mathcal{L}}(u-u_{\widetilde{\mathcal{K}}})\big|
&\leq \big(\alpha_{\max}\|\widetilde{u}\|_H+\|\widetilde{\mathcal{L}}\|_{H'}\big)\,\|u-u_{\widetilde{\mathcal{K}}}\|_H
=\big(\alpha_{\max}\|\widetilde{u}\|_H+\|\widetilde{\mathcal{L}}\|_{H'}\big)\delta_{\widetilde{\mathcal{K}}}.
\end{aligned}
\]
Hence
\[
	\alpha_{\min}\|e\|_H^2
\leq
\Big(\gamma\|u\|_H+\|\widetilde{\mathcal{L}}-\mathcal{L}\|_{H'}\Big)\|e\|_H
+\big(\alpha_{\max}\|u\|_H+\|\mathcal{L}\|_{H'}\big)\delta_{\mathcal{K}}
+\big(\alpha_{\max}\|\widetilde{u}\|_H+\|\widetilde{\mathcal{L}}\|_{H'}\big)\delta_{\widetilde{\mathcal{K}}}.
\]
Applying Young's inequality to the linear term in $\|e\|_H$ with parameter $\alpha_{\min}/2$ gives
\[
\frac{\alpha_{\min}}{2}\|e\|_H^2
\leq
\frac{1}{2\alpha_{\min}}\Big(\gamma\|u\|_H+\|\widetilde{\mathcal{L}}-\mathcal{L}\|_{H'}\Big)^2
+\big(\alpha_{\max}\|u\|_H+\|\mathcal{L}\|_{H'}\big)\delta_{\mathcal{K}}
+\big(\alpha_{\max}\|\widetilde{u}\|_H+\|\widetilde{\mathcal{L}}\|_{H'}\big)\delta_{\widetilde{\mathcal{K}}},
\]
which implies \eqref{eq2.11}. If $\mathcal{K}=\widetilde{\mathcal{K}}$, the distances vanish and~\eqref{eq:nogap} follows.

If $\mathcal{K}$ and $\widetilde{\mathcal{K}}$ are cones with vertex at $0$, testing \eqref{eq2.9} by $v=0$ and $v=2u$ yields 
$\mathcal{B}(u,u)=\mathcal{L}(u)$ and thus $\|u\|_H\le \alpha_{\min}^{-1}\|\mathcal{L}\|_{H'}$; the same argument for $\widetilde{u}$. 
Substituting these into \eqref{eq2.11} yields \eqref{eq2.13}.
\end{proof}

\begin{corollary}
	The distance \(\max{(\delta_{\widetilde{\mathcal{K}}}, \delta_{\mathcal{K}})} \leq \lVert \widetilde{g} - g \rVert _ H\). 
\end{corollary}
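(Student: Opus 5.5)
The plan is to exhibit, for each of the two distances, an explicit competitor in the relevant constraint set whose distance to the given point is controlled by $\lVert \widetilde g - g\rVert_H$. Here $H=V=L^2(\Omega;H_0^1(D))$, and the obstacle sets are $\mathcal K=\{v\ge g\}$ and $\widetilde{\mathcal K}=\{v\ge \widetilde g\}$. We assume, as is implicit in the statement, that $\widetilde g - g\in H$, i.e.\ the difference of the obstacles vanishes on $\partial D$ and lies in $L^2(\Omega;H_0^1(D))$.

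For $\delta_{\widetilde{\mathcal K}}=\inf_{v\in\widetilde{\mathcal K}}\lVert u-v\rVert_H$, the simplest competitor is the shift $v=u+(\widetilde g-g)$. Since $u\ge g$ a.e.\ in $D\times\Omega$, we get $v\ge \widetilde g$ a.e. Since $u\in H$ and $\widetilde g-g\in H$, we also have $v\in H$, so $v\in\widetilde{\mathcal K}$. Hence
\[
\delta_{\widetilde{\mathcal K}}\le \lVert u-v\rVert_H=\lVert \widetilde g-g\rVert_H .
\]
The symmetric choice $v=\widetilde u+(g-\widetilde g)\in\mathcal K$ gives $\delta_{\mathcal K}\le \lVert \widetilde g-g\rVert_H$. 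Taking the maximum of the two bounds proves the corollary.

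A sharper competitor is $v=\max(u,\widetilde g)=u+(\widetilde g-u)^+$. Here $(\widetilde g-u)^+\le(\widetilde g-g)^+$ pointwise, and the gradient of $(\widetilde g-u)^+$ is supported where $\widetilde g>u\ge g$. This choice, however, does not directly give a gradient bound by $\lVert\nabla(\widetilde g-g)\rVert$: on the set $\{\widetilde g>u\}$ the gradient is $\nabla(\widetilde g-u)$, not $\nabla(\widetilde g-g)$. So the shift argument is the one to use.

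The only delicate point is admissibility of the competitor, namely that $\widetilde g-g$ has zero trace on $\partial D$ and is square integrable in $\omega$ with values in $H_0^1(D)$. If the paper only assumes $g,\widetilde g\in L^2(\Omega;H^1(D))$, the norm $\lVert\widetilde g-g\rVert_H$ must be read as the $L^2(\Omega;H^1(D))$ seminorm of the gradient, and the trace condition must be added as a hypothesis. I would state this explicitly; with it, the argument above is complete. Measurability in $\omega$ is inherited from $u$, $g$ and $\widetilde g$, so it needs no extra work.
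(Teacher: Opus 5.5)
Your proof is correct, and it departs from the paper's at exactly the point you flag. The paper uses the truncation competitor $u+\max(\widetilde g-u,0)=\max(u,\widetilde g)\in\widetilde{\mathcal K}$ (and symmetrically $\max(\widetilde u,g)\in\mathcal K$), and then simply asserts $\lVert(\widetilde g-u)^+\rVert_H\le\lVert g-\widetilde g\rVert_H$.

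That choice has one advantage. It is admissible under weaker hypotheses: only $\widetilde g\le 0$ on $\partial D$ is needed, with no condition on the trace of $\widetilde g-g$. The pointwise bound $0\le(\widetilde g-u)^+\le(\widetilde g-g)^+$ also gives the estimate in $L^2(\Omega;L^2(D))$. But $H$ carries the gradient norm, and the asserted inequality is false in general.

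For example, take $D=(-2,2)$, $a=1$, $g=0$, and $f=4-12x^2$ on $(-1,1)$, $f=-8$ elsewhere. Then $u=(1-x^2)^2$ on $(-1,1)$ and $u=0$ elsewhere. Let $\widetilde g=\varepsilon\psi$, where $\psi\in H_0^1(D)$ equals $1$ on $[-3/2,3/2]$ and is linear on the rest. Then $\lVert\nabla(\widetilde g-g)\rVert_{L^2(D)}^2=4\varepsilon^2$. By contrast, $\lVert\nabla(\widetilde g-u)^+\rVert_{L^2(D)}^2$ is strictly larger, by a term of order $\varepsilon^{3/2}$. The excess comes from the thin layer $\{0<u<\varepsilon\}$ next to the free boundary, where $u$ grows quadratically.

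Your shift competitor $u+(\widetilde g-g)$ avoids this because $u-v=g-\widetilde g$ identically. So your argument is the one that actually proves the corollary in the norm of $H$.

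Its price, $\widetilde g-g\in L^2(\Omega;H_0^1(D))$, is not an artifact of your method. Take $g\equiv-1$, $\widetilde g\equiv-2$ and $f$ a sufficiently negative constant. Then $\nabla(\widetilde g-g)=0$, but $\widetilde u$ drops below $-1$ on a set of positive measure, so $\delta_{\mathcal K}>0$. Hence the trace condition you add is genuinely needed for the statement (equivalently, $\lVert\widetilde g-g\rVert_H$ must be read as presupposing $\widetilde g-g\in H$).
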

\begin{proof}
	From definition, \(u + \max{(\widetilde{g} - u, 0)}  \geq \widetilde{g}\).
	Then,
\[	 \begin{aligned}
	\delta_{\widetilde{\mathcal{K}}}\,=\,\inf_{v \in \widetilde{\mathcal{K}}}\|\, {u}-v\,\|_H & \leq \|\,u - (u + \max{(\widetilde{g} - u, 0))} \,\|_H \\
	& \leq \|\, g - \widetilde{g} \,\|_H.
 \end{aligned}
 \]
Similarly, we have \(	\delta_{\mathcal{K}}\, \leq \|\, g - \widetilde{g} \,\|_H\).
\end{proof}
}

\begin{corollary}\label{thm2}
	Let \(H = L^2(\Omega; H_0^1(D))\). Keep the assumptions stated in Theorem \ref{thm1}.  Assume the perturbed coefficient \(\widetilde{a}\) satisfies \(0 < a_{\min} \leq \widetilde{a} \leq a_{\max} < \infty\), a.e. \ on \(D \times \Omega\) and \(f, \widetilde{f} \in L^2(\Omega; L^2(D))\). Let $ u \in \mathcal{K}$ and $\widetilde{u} \in \widetilde{\mathcal{K}}$ be the unique solutions to~\eqref{eq5} and~\eqref{eq8}, respectively. 

{
		Then
{\small	\begin{align} \label{eq2.14}
\|u-\widetilde u\|_{H}
	\leq \, & \frac{1}{a_{\min}}\,\left(\|\widetilde a-a\|_{L^{\infty}(\Omega;L^{\infty}(D))}\,\|u\|_{H}+ C_{D}\,\|\widetilde f-f\|_{L^{2}(\Omega;L^{2}(D))}\right)   + \\
	&	\sqrt{\frac{2}{a_{\min}}\!\left(\big(a_{\max}\|u\|_{H}+C_{D}\|f\|_{L^{2}(\Omega;L^{2}(D))}\big)\,\delta_{\mathcal{K}}+\big(a_{\max}\|\widetilde u\|_{H}+C_{D}\|\widetilde f\|_{L^{2}(\Omega;L^{2}(D))}\big)\,\delta_{\widetilde{\mathcal{K}}}
\right)}.  \notag
\end{align}}}
In particular, if \(g=\widetilde{g}\) then \eqref{eq2.14} reduces to
{\small \begin{equation}\label{eq2.15}
\|u-\widetilde u\|_{L^{2}(\Omega;H_{0}^{1}(D))}
\,\leq\, a_{\min}^{-1}\!\left(\|\widetilde a-a\|_{L^{\infty}(\Omega;L^{\infty}(D))}\,\|u\|_{L^{2}(\Omega;H_{0}^{1}(D))}
+ C_{D}\,\|\widetilde f-f\|_{L^{2}(\Omega;L^{2}(D))}\right).
\end{equation}}
\noindent where \(C_D > 0\) is the Poincar\'{e} constant for the domain \(D\), i.e., \( \lVert v \rVert _{L^2(D)} \leq C_D \lVert v \rVert _{H_0^1(D)}\) for any \(v \in H_0^1(D)\).
\end{corollary}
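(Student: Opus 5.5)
The plan is to specialize Proposition~\ref{prop2.2} with $H = V = L^2(\Omega; H_0^1(D))$, $\mathcal{B} = b$, $\widetilde{\mathcal{B}} = \widetilde b$, $\mathcal{L} = \ell$, $\widetilde{\mathcal{L}} = \widetilde\ell$. The constraint sets are $\mathcal{K}$ and $\widetilde{\mathcal{K}}$ as defined above. Everything then reduces to identifying the constants $\alpha_{\min}$, $\alpha_{\max}$, $\gamma$ and bounding the dual norms.

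First, the hypotheses of Proposition~\ref{prop2.2} must be checked. The sets $\mathcal{K}$ and $\widetilde{\mathcal{K}}$ are convex, since the pointwise constraint $v \ge g$ is preserved under convex combinations. They are closed in $H$, because convergence in $L^2(\Omega;H_0^1(D))$ implies $L^2(D\times\Omega)$ convergence (by Poincar\'e) and hence a.e.\ convergence of a subsequence. Nonemptiness is implicit in the existence statement of Theorem~\ref{thm1}; I would take it as part of the standing assumptions, for instance $\max(g,0)\in V$, which holds since $g\le 0$ on $\partial D$. The forms $b$ and $\widetilde b$ are symmetric. Exactly as in the proof of Theorem~\ref{thm1}, both are coercive with $\alpha_{\min}=a_{\min}$ and bounded with $\alpha_{\max}=a_{\max}$, using $a_{\min}\le a,\widetilde a\le a_{\max}$.

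Second, I identify $\gamma$. By Cauchy--Schwarz in $D\times\Omega$,
\[
|b(v,w)-\widetilde b(v,w)| = \Big|\mathbb{E}\Big[\int_D (a-\widetilde a)\nabla v\cdot\nabla w\,\mathrm{d}\bx\Big]\Big| \le \|\widetilde a-a\|_{L^\infty(\Omega;L^\infty(D))}\|v\|_H\|w\|_H ,
\]
so $\gamma=\|\widetilde a-a\|_{L^\infty(\Omega;L^\infty(D))}$. For the linear forms, applying Cauchy--Schwarz and then the Poincar\'e inequality pointwise in $\omega$ gives
\[
|\ell(v)| \le \|f\|_{L^2(\Omega;L^2(D))}\|v\|_{L^2(\Omega;L^2(D))} \le C_D\|f\|_{L^2(\Omega;L^2(D))}\|v\|_H .
\]
Hence $\|\ell\|_{H'}\le C_D\|f\|_{L^2(\Omega;L^2(D))}$. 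The same argument gives $\|\widetilde\ell\|_{H'}\le C_D\|\widetilde f\|$ and $\|\widetilde\ell-\ell\|_{H'}\le C_D\|\widetilde f-f\|_{L^2(\Omega;L^2(D))}$.

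Finally, I substitute these quantities into \eqref{eq2.11}. Since the right side of \eqref{eq2.11} is monotone in each of these quantities, replacing them by the upper bounds yields \eqref{eq2.14}. When $g=\widetilde g$, we have $\mathcal{K}=\widetilde{\mathcal{K}}$, so \eqref{eq:nogap} applies, or equivalently the preceding corollary gives $\delta_{\mathcal K}=\delta_{\widetilde{\mathcal K}}=0$; this gives \eqref{eq2.15}.

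The only step requiring any care is the closedness and nonemptiness of the constraint sets, and that argument is the subsequence one above. The rest is a direct substitution.
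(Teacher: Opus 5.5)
Your proposal is correct and follows the paper's proof. Both specialize Proposition~\ref{prop2.2} with $\alpha_{\min}=a_{\min}$, $\alpha_{\max}=a_{\max}$ and $\gamma=\|\widetilde a-a\|_{L^\infty(\Omega;L^\infty(D))}$ (obtained by Cauchy--Schwarz), then read off \eqref{eq2.14} from \eqref{eq2.11}. You just make explicit what the paper leaves implicit: the Poincar\'e bounds $\|\ell\|_{H'}\le C_D\|f\|_{L^2(\Omega;L^2(D))}$ and their analogues for $\widetilde\ell$ and $\widetilde\ell-\ell$, the monotonicity of the right-hand side of \eqref{eq2.11}, and the convexity, closedness and nonemptiness of the constraint sets.
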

	
\begin{proof}
We apply Proposition~\ref{prop2.2}. From 
\begin{align*}
| \mathbb{E}[\int_D (a-\widetilde{a}) \nabla v \cdot \nabla w\, \mathrm{d} \bx] |
& \leq \|\widetilde{a}-a\|_{L^{\infty}\left(\Omega; L^{\infty}(D)\right)} \left( \mathbb{E}[\int_D | \nabla v |^2  \,\mathrm{d} \bx] \right)^{1/2}\left( \mathbb{E}[\int_D | \nabla w |^2 \, \mathrm{d} \bx ] \right)^{1/2} \\
& =  \|\widetilde{a}-a\|_{L^{\infty}\left(\Omega; L^{\infty}(D)\right)} \| v \|_H \|w\|_H,
	\end{align*}
we see that~\eqref{eq2.8} holds with $\gamma=\|\widetilde{a}-a\|_{L^{\infty}\left(\Omega; L^{\infty}(D)\right)}$.   Then,~\eqref{eq2.14} follows from~\eqref{eq2.11}.
\end{proof}

\subsection{Karhunen-Lo\`eve expansions}

Randomness in many systems can often be effectively approximated using a small number of uncorrelated or independent random variables. In this work, we consider the approximation of the random fields $\widetilde{a}$, $\widetilde{f}$ and $\widetilde{g}$ through truncated Karhunen-Lo\`eve expansions (\cite{Maitre2010, Lord2014introduction}). 

Denote by \(\mu_a(\bx)\) the expectation of \(a(\bx, \omega)\).  Define a function $V_{a}: D \times D \to \mathbb{R}$ by
\[ V_{a}(\bx, \bx^{\prime}) \coloneq \int_{\Omega} (a(\bx,\omega) - \mu_a(\bx)) (a(\bx^{\prime},\omega) - \mu_a(\bx^{\prime})) \, \mathrm{d} \mathbb{P}(\omega)\quad
(\bx,\bx^{\prime}) \in D \times D,\]
and then define the Carleman operator $\mathcal{V}_{a}\colon L^{2}(D) \to L^{2}(D)$ by
\[(\mathcal{V}_{a}\phi)(\bx) \coloneq \int_{D} V_{a}(\bx,\bx^{\prime}) \phi (\bx^{\prime}) \, \mathrm{d} \bx^{\prime}\quad
\forall \phi  \in L^{2}(D).\]

The Carleman operator \(\mathcal{V}_a\) is a symmetric, nonnegative definite and compact integral operator \cite[Section 1.3]{Lord2014introduction}. It possesses a countable sequence \( \{ (v_k^a, \phi_k^a) \}_{k \in \mathbb{N} _+}\) of eigenpairs with \(v_1^a \geq v_2^a \geq \cdots \geq 0 \) satisfying 
\[\sum _{k = 1}^{\infty} v_k^a = \int_D \int _{\Omega} (a(\bx, \omega) - \mu_a(\bx))^2 \,\mathrm{d}\mathbb{P}(\omega) \, \mathrm{d} \bx < \infty.\]

\begin{proposition}[\cite{Schwab2006}] \label{thm2.7}
If $ a \in L^{2}(D; L^2(\Omega)) $, then there exists a sequence of random variables $ \{{\xi_k^a}\}_{k \geqslant 1} \subset L^{2}(\Omega) $ satisfying
    \begin{equation*}
        \int_{\Omega} {\xi_i^a}(\omega) \,\mathrm{d} \mathbb{P}(\omega) = 0, \quad
        \int_{\Omega} {\xi_i^a}(\omega) {\xi_j^a}(\omega) \,\mathrm{d}\mathbb{P}(\omega) = \delta_{ij} \quad
        \forall i, j \geqslant 1
    \end{equation*}
and such that the random field $a(\cdot,\cdot)$ can be expanded in $ L^{2}(D; L^2(\Omega)) $ as
    \begin{equation} \label{eq2.17}
        a(\bx,\omega) = \mu_{a}({\bx}) + \sum_{k=1}^{\infty} \sqrt{v_{k}^a} \phi_{k}^{a}(\bx) \xi_{k}^{a}(\omega),
    \end{equation}
where  \( \{ (v_k^a, \phi_k^a)\} _{k \in \mathbb{N}_+} \)
is the eigenpair sequence of the Carleman operator \(\mathcal{V}_a\).
Moreover, for ${v_k}^{a} \neq 0 $,
    \begin{equation*}
        \xi_{k}^{a}(\omega) \coloneqq \frac{1}{\sqrt{v_k^a}} \int_{D} (a(\bx,\omega) - \mu_a) \phi_{k}^{a}(\bx) \, \mathrm{d}\bx \quad
        \forall k \geqslant 1.
    \end{equation*}
\end{proposition}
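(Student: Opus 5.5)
The plan is to prove Proposition~\ref{thm2.7} via the spectral theory of the compact, symmetric, nonnegative Carleman operator $\mathcal{V}_a$, which was recorded just before the statement.

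\textbf{Step 1: Reduce to the centered field.} Set $a_c(\bx,\omega) \coloneqq a(\bx,\omega)-\mu_a(\bx)$. Since $a\in L^2(D;L^2(\Omega))\simeq L^2(D\times\Omega)$ by Fubini, we have $a_c\in L^2(D\times\Omega)$ and $V_a\in L^2(D\times D)$. Hence $\mathcal{V}_a$ is Hilbert--Schmidt. By the spectral theorem its eigenfunctions $\{\phi_k^a\}$, chosen orthonormal and completed by an orthonormal basis of $\ker\mathcal{V}_a$, form an orthonormal basis of $L^2(D)$.

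\textbf{Step 2: Expand pointwise in $\omega$.} For a.e.\ $\omega$ we have $a_c(\cdot,\omega)\in L^2(D)$, so
\[
a_c(\cdot,\omega)=\sum_k c_k(\omega)\,\phi_k^a,
\qquad
c_k(\omega)=\int_D a_c(\bx,\omega)\phi_k^a(\bx)\,\mathrm{d}\bx .
\]
\begin{itemize}
\item \emph{Mean zero.} Fubini and $\mathbb{E}[a_c(\bx,\cdot)]=0$ give $\mathbb{E}[c_k]=0$.
\item \emph{Covariance.} Fubini gives
\[
\mathbb{E}[c_ic_j]=\int_D\int_D V_a(\bx,\bx')\phi_i^a(\bx)\phi_j^a(\bx')\,\mathrm{d}\bx\,\mathrm{d}\bx'=(\mathcal{V}_a\phi_j^a,\phi_i^a)_{L^2(D)}=v_j^a\delta_{ij}.
\]
\end{itemize}
For $v_k^a\neq0$, set $\xi_k^a=c_k/\sqrt{v_k^a}$; this is exactly the stated formula, and the $\xi_k^a$ are orthonormal with mean zero. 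For indices with $v_k^a=0$ we get $\mathbb{E}[c_k^2]=0$, so $c_k=0$ a.s. These terms vanish, and we may assign any orthonormal mean-zero variables to them without affecting the sum.

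\textbf{Step 3: Convergence in $L^2(D;L^2(\Omega))$ (the main point).} Parseval for each $\omega$ gives
\[
\Big\|a_c(\cdot,\omega)-\sum_{k\le N}c_k(\omega)\phi_k^a\Big\|_{L^2(D)}^2=\sum_{k>N}c_k(\omega)^2 .
\]
Taking expectations and using monotone convergence to swap sum and expectation, the squared truncation error in $L^2(D\times\Omega)$ equals $\sum_{k>N}v_k^a$. This tends to $0$ because
\[
\sum_k v_k^a=\mathbb{E}\|a_c\|_{L^2(D)}^2<\infty,
\]
which is the trace identity stated before the proposition (it also follows from the same Parseval computation with $N=0$). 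This yields~\eqref{eq2.17} in $L^2(D;L^2(\Omega))$.

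The only delicate points are measurability and the Fubini exchanges, which are justified by square integrability on the product space.
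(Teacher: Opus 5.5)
Your proof is correct. It is the standard spectral-theorem/Parseval argument for the Karhunen--Lo\`eve expansion, and the paper gives no proof of its own (it simply quotes the result from \cite{Schwab2006}), so there is no competing route to compare.

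The one step to phrase more carefully is the choice of $\xi_k^a$ when $v_k^a=0$. Zero eigenvalues appear in the nonincreasing sequence $v_1^a\ge v_2^a\ge\cdots$ only when $\mathcal{V}_a$ has finite rank. In that case, extending the finitely many $\xi_k^a$ to an infinite orthonormal mean-zero family requires $L^2(\Omega)$ to be infinite-dimensional; the paper's assumption that $\mathcal{F}$ is rich enough covers this, or you can avoid it by indexing only the positive eigenvalues. The extra kernel directions you add for Parseval need no $\xi$'s at all, since their coefficients vanish almost surely.
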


The formula \eqref{eq2.17} is called Karhunen-Lo\`eve expansion of the random field \(a \in L^2(D; L^2(\Omega))\).

Based on the expansion \eqref{eq2.17}, for any \(M_a \in \mathbb{N}\), we define the truncated diffusion coefficient by
\begin{align}\label{truncate_a}
	{a}_{M_a}({\bx}, \omega) = \mu_{a}({\bx})+\sum_{k=1}^{M_a} \sqrt{v_{k}^{a}} \phi_{k}^{a}({\bx}) \xi_{k}^a(\omega).
\end{align}
Similarly, the source term \(f\) is approximated by the truncated expansion 
\begin{equation}\label{truncate_f}
	{f}_{M_f}({\bx}, \omega) = \mu_{f}({\bx})+\sum_{k=1}^{M_f} \sqrt{v_{k}^{f}} \phi_{k}^{f}({\bx}) \xi_{k}^{f}(\omega),
\end{equation}
{ and the truncated expansion of obstacle function \(g\)
\begin{equation}\label{truncate_g}
	{g}_{M_g}({\bx}, \omega) = \mu_{g}({\bx})+\sum_{k=1}^{M_g} \sqrt{v_{k}^{g}} \phi_{k}^{g}({\bx}) \xi_{k}^{g}(\omega).
\end{equation}}

We denote the truncated approximations of the coefficient,  source term and obstacle function, by  \(\widetilde{a} = a_{M_a}\), \(\widetilde{f} = f_{M_f}\) and \(\widetilde{g} = g_{M_g}\) and use the notation \(\widetilde{a}, \widetilde{f}, \widetilde{g}\) throughout to refer to the Karhunen-Lo\`eve approximations of \(a, f, g\) with truncation levels \(M_a, M_f, M_g\), respectively. 

To apply {Corollary~\ref{thm2}}, uniform convergence of the truncated diffusion coefficient is required. However, {Proposition~\ref{thm2.7}} establishes only $L^2$-convergence of the truncated term:
\begin{equation}
\big\| a - a_{M_a} \big\|_{L^2(\Omega; L^2(D))} \to 0 \quad \text{as} \quad M_a \to \infty.
\end{equation}

To achieve the required uniform convergence for the Karhunen-Lo\`eve expansion:
\begin{equation*}
\big\| a- a_{M_a}\big\|_{L^\infty(\Omega ; L^\infty(D))} \to 0 \quad \text{as} \quad M_a \to \infty,
\end{equation*}
the analyticity of \(a(\bx,\omega)\) in its physical variables, combined with the uniform boundedness of the corresponding sequence \(\{\xi_k^a\}_{k \in \mathbb{N}_+}\)  is sufficient to ensure the uniform convergence of its Karhunen-Lo\`eve expansion.

\begin{proposition} (\cite{Todor2007})\label{thm2.8}
If the covariance \(V_a\) of \(a\) is piecewise analytic on \(D \times D\) and \( \{\xi_k^a\}_{k \in \mathbb{N}_+}\) is bounded uniformly,
then
	\begin{equation}
	\|a - a_{M_a}\|_{L^{\infty}(\Omega; L^\infty(D))} \leq c_{a} \exp\left(-c_{1,a} M_a^{1/n}\right) \quad \forall M_a \in \mathbb{N},
	\end{equation}
where \(n\) is the dimension of physical space, \(c_a\) is a bound of \(\{\xi_k^a \}_{k \in \mathbb{N}_+}\), \(c_{1,a}\) is a positive constant.
\end{proposition}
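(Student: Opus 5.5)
The estimate follows from the decay of the Karhunen--Lo\`eve eigenvalues of the analytic covariance, combined with uniform bounds on the random variables and pointwise bounds on the eigenfunctions. Write
\[
a(\bx,\omega)-a_{M_a}(\bx,\omega)=\sum_{k>M_a}\sqrt{v_k^a}\,\phi_k^a(\bx)\,\xi_k^a(\omega),
\]
which is the expansion of Proposition~\ref{thm2.7}. Using $|\xi_k^a(\omega)|\le c_a$ for a.e.\ $\omega$, we get for a.e.\ $(\bx,\omega)$
\[
|a-a_{M_a}|\le c_a\sum_{k>M_a}\sqrt{v_k^a}\,\|\phi_k^a\|_{L^\infty(D)} .
\]
The task therefore reduces to bounding this deterministic tail by $C\exp(-c_{1,a}M_a^{1/n})$.

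\textbf{Step 1: eigenvalue decay.} Because $V_a$ is piecewise analytic on $D\times D$, I would invoke the classical Schwab--Todor argument. The covariance is approximated on each analytic patch by tensor polynomials of degree $p$ in $\bx'$, with error $O(e^{-bp})$. This gives a finite-rank operator of rank $O(p^n)$ that approximates $\mathcal{V}_a$ within $O(e^{-bp})$ in operator norm. The min--max characterization of eigenvalues then yields $v_k^a\le C e^{-b' k^{1/n}}$.

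\textbf{Step 2: $L^\infty$ bounds on eigenfunctions.} From $\phi_k^a=(v_k^a)^{-1}\mathcal{V}_a\phi_k^a$ and the Cauchy--Schwarz inequality,
\[
\|\phi_k^a\|_{L^\infty(D)}\le (v_k^a)^{-1}\sup_{\bx}\|V_a(\bx,\cdot)\|_{L^2(D)} .
\]
This crude bound is too lossy: it would produce $(v_k^a)^{-1/2}$ growth in the tail. The refinement I would try is an interpolation: bound $\|\phi_k^a\|_{L^\infty}$ by $C(v_k^a)^{-s}$ for arbitrarily small $s>0$, using the analytic smoothing of $\mathcal{V}_a$ together with a Sobolev embedding. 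Inserting this into the tail gives $\sum_{k>M_a}(v_k^a)^{1/2-s}$, which is still bounded by $\sum_k e^{-b''k^{1/n}}$.

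\textbf{Step 3: summing the tail.} Compare the sum with $\int_{M_a}^\infty e^{-b''t^{1/n}}\,dt$. This integral is bounded by a polynomial factor times $e^{-b''M_a^{1/n}}$, and the polynomial factor is absorbed by slightly decreasing the exponent, which gives the constant $c_{1,a}$.

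The main obstacle is Step 2, i.e., controlling the sup-norms of the eigenfunctions uniformly enough that the eigenvalue decay is not destroyed. Since this is the result quoted from \cite{Todor2007}, I would ultimately defer to that reference for this step.
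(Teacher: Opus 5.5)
The paper does not prove this proposition; it imports it from \cite{Todor2007}. Your three steps are the Schwab--Todor argument behind that citation, and they are correct: exponential eigenvalue decay from a rank-$O(p^n)$ piecewise polynomial approximation of $V_a$ combined with min--max, then the bound $\|\phi_k^a\|_{L^\infty(D)}\le C(s)(v_k^a)^{-s}$, then summation of the tail after the Weierstrass M-test reduction.

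Step 2 is not really an obstacle. On each analytic patch $D_j$ you have $\|\phi_k^a\|_{H^t(D_j)}=(v_k^a)^{-1}\|\mathcal{V}_a\phi_k^a\|_{H^t(D_j)}\le C_t(v_k^a)^{-1}$ for every $t$, so interpolating against $\|\phi_k^a\|_{L^2(D)}=1$ and applying the Sobolev embedding gives the bound for any $s>n/(2t)$. Note also that the prefactor you obtain is $c_a$ times a constant depending on $V_a$ and $s$, which is how the constant in the statement has to be read: rescaling $a-\mu_a$ leaves the $\xi_k^a$ unchanged, so a prefactor equal to the bound on the $\xi_k^a$ alone cannot hold.
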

{The uniform convergence of the Karhunen-Lo\`eve expansion can be established under the conditions given in \cite{Babuska2004}. 
In particular, it requires that the random variables \( \{\xi_k^a\}_{k \in \mathbb{N}_+} \) are uniformly bounded, 
the eigenfunctions \( \{\phi_{k}^{a}\}_{k \in \mathbb{N}_+} \) are sufficiently smooth (which holds when the covariance function is smooth), 
and that the eigenpairs satisfy the decay condition  \(\sqrt{v_{k}^{a}} \, \| \phi_{k}^{a} \|_{L^{\infty}(D)} = O\!\left(\frac{1}{1+k^s}\right), \,s>1\).  Moreover, in certain special cases, the uniform convergence of the Karhunen-Lo\`eve expansion can also be guaranteed by \cite[Corollary 9.35]{Lord2014introduction}. Specifically, if the random variables \( \{\xi_k^a\}_{k \in \mathbb{N}_+} \) remain uniformly bounded and the covariance random field \(a\) follows a Whittle--Matérn covariance structure, the result holds as well.}

{In fact, if there is a deterministic sequence \(b_k \geq 0\) with \(\operatorname*{ess\,sup}_{\omega\in\Omega} |\xi_k^a(\omega)| \;\le\; b_k\) for all \(k\) and \(\sum_{k=1}^{\infty}\sqrt{v_k^a} b_k \| \phi_k^a \|_{L^{\infty}(D)} < \infty\), then by the Weierstrass M-test (\cite[Theorem 7.10]{Rudin1976}), the Karhunen-Lo\`eve expansion is uniformly absolutely convergent.}

To establish the existence of a solution to equation (\ref{eq8}), we require the uniform coercivity of the bilinear form \(\widetilde{b}\). While Assumption~\ref{2.2} ensures that \( a \) is uniformly bounded, this does not necessarily guarantee that the lower and upper bounds of \( a \) are preserved in its finite-term truncation \( a_{M_a}\) unless \(M_a\) is sufficiently large (Proposition~\ref{thm2.8}). However, if \( \{\xi_k^a\}_{k \in \mathbb{N}_+} \) are independent, then under Assumption~\ref{2.2}, these bounds hold for \( \widetilde{a} \) as well for any \(M_a \geq 0\).  As shown in~\cite[Remark 3.3]{Todor2007}, if we denote \( \mathcal{F}_{M_a} \subset \mathcal{F} \) as the \( \sigma \)-algebra generated by the random variables \( \xi_1, \ldots, \xi_{M_a}\), it follows \[\mathbb{E}[a|\mathcal{F}_{M_a}] = a_{M_a}.\] 
The monotonicity of conditional expectations ensures then that the lower and upper
bounds of \({a}\) hold for \(a_{M_a}\) too, i.e.
\[0<a_{\min} \leq {a}_{M_a}({\bx},\omega)\leq a_{\max}<\infty \quad \text {a.e.\ in } D \times \Omega.\]
Since $f$ is expanded in the form of Karhunen-Lo\`eve, and $f \in L^{2}\left(\Omega; L^{2}(D)\right)$, it follows from Proposition \ref{thm2.7} that 
\begin{equation*}
\|f - \widetilde{f}\|_{L^{2}\left(\Omega; L^{2}(D)\right)} \to 0 \text{ as } M_f \to \infty.
\end{equation*}

We also need show the 
\[	\|\, g - g_{M_g}\,\| _{L^2(\Omega, H^1(D))}  \to 0 \text{ as }  M_g \to \infty.\]
This can be ensured if \(g \in L^2(\Omega, H^1(D))\) (\cite[Theorem 7.2.8]{Hsing2015}, \cite{Schwab2006}).

Consequently, problem~\eqref{eq8} is well-posed.  For simplicity, we assume that  $\xi_{k}^a$,  $\xi_{k}^f$ and $\xi_{k}^g$   have the same distribution and write
$\xi_k = \xi_{k}^a=\xi_{k}^f=\xi_{k}^g$ (\cite[page 389]{Lord2014introduction}) and denote $M = \max \{M_a, M_f, M_g\}$.

\subsection{The case of a finite-dimensional noise} 

{The weak form of the problem~(\ref{eq5}) is not an ideal starting point for numerical approximation, as it involves integration over the abstract probability space \( \Omega \) with respect to the measure \( \mathbb{P} \). To enable efficient numerical discretization, it is advantageous to express the randomness using a finite number of independent random variables. To this end, it is customary to transform the problem onto a parametric domain \( \Gamma \subset \mathbb{R}^M \), where the randomness is encoded by a finite set of independent random variables \( \boldsymbol{\xi} = [\xi_1, \ldots, \xi_M]^\top \).
}

Specifically, we focus on the transformation of the bilinear form
\[    b(u, v) = \int_\Omega \int_D a(\bx, \omega) \nabla u(\bx, \omega) \cdot \nabla v(\bx, \omega) \, \mathrm{d}\bx \, \mathrm{d}\mathbb{P}(\omega).\]
A similar discussion can be made for \( \ell(v) \). 

{To make this formulation more tractable, we assume that the coefficient \( \widetilde{a} \), source term \( \widetilde{f} \) and obstacle function \(\widetilde{g}\)  are functions of finitely many random variables \( \xi_k \colon \Omega \to \Gamma_k \subset \mathbb{R} \), for \( 1 \leq k \leq M \), as is the case for truncated Karhunen-Lo\`eve expansions (see Section~2.4). In this setting, each realization \( \omega \in \Omega \) corresponds to a point \( \by = [y_1, \dots, y_M]^\top \in \Gamma \coloneqq \Gamma_1 \times \cdots \times \Gamma_M \subset \mathbb{R}^M \), and the stochastic functions \( \widetilde{a}(\bx, \omega) \), \(\widetilde{f}(\bx, \omega) \), \(\widetilde{g}(\bx, \omega) \) and \( \widetilde{u}(\bx, \omega) \) can be equivalently represented as deterministic parametric functions \( \widetilde{a}(\bx, \by) \), \( \widetilde{f}(\bx, \by) \),   \( \widetilde{g}(\bx, \by) \) and \( \widetilde{u}(\bx, \by) \). This setting is referred to as the finite-dimensional noise case~(\cite[page 394]{Lord2014introduction}).}

\begin{definition}[Finite-dimensional noise] 
Let  $\xi_k\colon \Omega \to \Gamma_{k}$, $ k=1,\ldots,M$, be real-valued random variables, $M<\infty$.
A function  $v \in L^{2}\left(\Omega; L^{2}(D)\right)$  of the form  $v(\boldsymbol{x}, \boldsymbol{\xi}(\omega))$  for
	   $\boldsymbol{x} \in D$ and $\omega \in \Omega $, where $\boldsymbol{\xi}=\left[\xi_{1}, \ldots, \xi_{M}\right]^{\top}\colon \Omega \to \Gamma \subset \mathbb{R}^{M}$  and 
$\Gamma \coloneqq \Gamma_{1} \times \cdots \times \Gamma_{M}$, is called a finite-dimensional or an $M$-dimensional noise.
\end{definition}

In what follows we use $p(\boldsymbol{y})$ to denote the joint probability density of $\boldsymbol{\xi} = \left[\xi_{1}, \ldots, \xi_{M}\right]^{\top}$, and \(\Gamma = \Pi _{k=1}^{M}\Gamma_{k}\). Each $\xi_i$ is assumed to have a density function $p_i(y_i)\colon \Gamma _i \to \mathbb{R}^{+}$ for $i = 1,\ldots, M$.
If $\{\xi_i\}_{i=1}^{M}$ are independent, then \(p(\boldsymbol{y}) = \Pi _ {i=1}^{M} p_i(y_i)\).

For a finite-dimensional noise $v = v(\boldsymbol{x}, \boldsymbol{\xi})$, we can compute its expectation using a change of variables. If $p$  is the joint density of $\boldsymbol{\xi}$, then
\[
	\|v\|_{L^{2}\left(\Omega; L^{2}(D)\right)}^{2}=\mathbb{E}\left[\|v\|_{L^{2}(D)}^{2}\right] = \int_{\Gamma} p(\boldsymbol{y})\|v(\cdot, \boldsymbol{y})\|_{L^{2}(D)}^{2} \, \mathrm{d}\boldsymbol{y},
	\]
where $y_{k}\coloneq  \xi_{k}(\omega)$ and $\boldsymbol{y}=\left[y_{1}, \ldots, y_{M}\right]^{\top}$. 
Thus, any random field
$v(\boldsymbol{x}, \boldsymbol{\xi}) \in L^{2}\left(\Omega; L^{2}(D)\right)$ can be associated with a function
 $v=v(\boldsymbol{x}, \boldsymbol{y})$  belonging to the weighted  $L^{2}$  space
\[
	L_{p}^{2}\left(\Gamma; L^{2}(D)\right) \coloneqq \left\{v\colon D \times \Gamma \to \mathbb{R} \mid \int_{\Gamma} p(\boldsymbol{y})\|v(\cdot, \boldsymbol{y})\|_{L^{2}(D)}^{2} \, \mathrm{d} \boldsymbol{y}<\infty\right\} .
	\]  
If the diffusion coefficient \(a(\boldsymbol{x}, \omega)\), the source \(f(\boldsymbol{x}, \omega)\) and obstacle function \(g(\bx, \omega)\)  are finite-dimensional noises, we can combine \(\boldsymbol{\xi}_a\), \(\boldsymbol{\xi}_f\) and  \(\boldsymbol{\xi}_g\) to form a finite-dimensional random variable \(\boldsymbol{\xi} = [\boldsymbol{\xi}_a, \boldsymbol{\xi}_f, \boldsymbol{\xi}_g]\).
In this case, $a(\boldsymbol{x}, \cdot)$, $f(\boldsymbol{x}, \cdot)$ and  $g(\boldsymbol{x}, \cdot)$ are $\sigma(\boldsymbol{\xi})$-measurable, and the solution $u(\boldsymbol{x}, \cdot)$ of (\ref{eq5}) is also $\sigma(\boldsymbol{\xi})$-measurable (\cite[Theorem 3.3]{Kornhuber2014}, \cite[Proposition 1.1]{Gwinner2000}).
Under the assumption of finite-dimensional noise, $u(\boldsymbol{x},\omega)$ can be described by the same random variable 
$\boldsymbol{\xi}$ by Doob-Dynkin lemma (cf.\ \cite[Proposition 3]{Rao2006}). Thus, we denote the solution as \(u(\boldsymbol{x},\boldsymbol{y})\) to explicitly represent \(u(\boldsymbol{x},\omega)\).

Returning to the weak form (\ref{eq8}), we obtain its deterministic equivalent.
 Let  $\widetilde{a}$, $\widetilde{f}$ and $\widetilde{g}$ be the truncated Karhunen-Lo\`eve expansions 
 (\ref{truncate_a}), (\ref{truncate_f}) and \eqref{truncate_g}, define
\[W \coloneqq L_{p}^{2}\left(\Gamma; H_0^{1}(D)\right) = \left\{v \colon D \times \Gamma \to \mathbb{R} \mid \int_{\Gamma} p(\boldsymbol{y})\|v(\bx, \boldsymbol{y})\|_{H_0^{1}(D)}^{2} \, \mathrm{d} \boldsymbol{y}<\infty \right\},
\]
\[\widetilde{\mathcal{K}} \coloneqq \{v \in W \mid v(\boldsymbol x,\boldsymbol y) \geq \widetilde{g}(\boldsymbol x, \by)\text{ a.e.\,in }D \times \Gamma \}.
\]
The variational inequality (\ref{eq8})  on  $D \times \Gamma$ is
\begin{equation}
	\label{eq11}
 \widetilde{u} \in \widetilde{\mathcal{K}} , \quad \widetilde b( \widetilde u, v-\widetilde u) \geq  \widetilde \ell(v- \widetilde u), \quad \forall \, v \in \widetilde{\mathcal{K}} ,
\end{equation}
where  $\widetilde{b}\colon W \times W \to \mathbb{R}$  and  $\widetilde{\ell}\colon W \to \mathbb{R}$  are defined by
\begin{align}
	\label{eq12}
\widetilde{b}(u, v) & \coloneqq \int_{\Gamma} p(\boldsymbol{y}) \int_{D} \widetilde{a}(\boldsymbol{x}, \boldsymbol{y}) \nabla u(\boldsymbol{x}, \boldsymbol{y}) \cdot \nabla v(\boldsymbol{x}, \boldsymbol{y}) \,\mathrm{d} \boldsymbol{x} \, \mathrm{d} \boldsymbol{y}, \\
\widetilde{\ell}(v) & \coloneqq \int_{\Gamma} p(\boldsymbol{y}) \int_{D} \widetilde{f}(\boldsymbol{x}, \boldsymbol{y}) v(\boldsymbol{x}, \boldsymbol{y})\, \mathrm{d} \boldsymbol{x} \, \mathrm{d} \boldsymbol{y} .\notag
\end{align}
The weight $ p\colon \Gamma \to \mathbb{R}^{+}$ is the joint density of  $\boldsymbol{\xi}=\left[\xi_{1}, \ldots, \xi_{M}\right]^{\top}$, 
where  $M\coloneqq\max \{M_a, M_f, M_g\}$  and $ \xi\colon \Omega \to \Gamma \subset \mathbb{R}^{M} $. Under these assumptions, $\widetilde{u} \in \widetilde{\mathcal{K}}$ is uniquely determined by  (\ref{eq11}). Consequently, we have transformed the stochastic problem (\ref{eq8}) into the deterministic problem (\ref{eq11}).

Transforming a stochastic variational inequality into its deterministic parametric form is a crucial step that enables the application of finite element methods to approximate solutions.
Working over $\Gamma$ instead of $\Omega $, we have
\[
	\widetilde{a}(\boldsymbol{x}, \boldsymbol{y})=\mu_{a}(\boldsymbol{x})+\sum_{k=1}^{M_a} \sqrt{v_{k}^{a}} \phi_{k}^{a}(\boldsymbol{x}) y_{k}, \quad \boldsymbol{x} \in D, \boldsymbol{y} \in \Gamma
\]
and a similar expression for $\widetilde{f}$.
We can ensure the existence of a unique solution of  (\ref{eq11}) under the following conditions.
 
\begin{assumption}\label{assumption5.4}
There exist two positive constants $ \widetilde{a}_{\min }\le\widetilde{a}_{\max }$ such that
\[
	0<\widetilde{a}_{\min } \leq \widetilde{a}(\boldsymbol{x}, \boldsymbol{y}) \leq \widetilde{a}_{\max }<\infty, \quad \text { a.e.\,in } D \times \Gamma.
\]
\end{assumption}

Recall that if $\xi_1,\ldots, \xi_{M_a}$ are independent, then Assumption~\ref{2.2} implies
\[ {a}_{\min} \leq  \widetilde{a}(\boldsymbol{x}, \boldsymbol{y}) \leq a_{\max}.\]

\begin{theorem}\label{9.43}
Under Assumption~\ref{assumption5.4}, for any $\widetilde{f} \in L_{p}^{2}\left(\Gamma; L^{2}(D)\right)$, the variational inequality \eqref{eq11} admits a unique solution.
\end{theorem}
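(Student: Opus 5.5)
The plan is to apply Lemma~\ref{lem1.1} on the Hilbert space $W = L_p^2(\Gamma; H_0^1(D))$, equipped with the inner product $(v,w)_W = \int_\Gamma p(\by)\,(\nabla v(\cdot,\by), \nabla w(\cdot,\by))_{L^2(D)}\,\mathrm{d}\by$. This is the same argument as in Theorem~\ref{thm1}, transplanted from $\Omega$ to the parametric domain $\Gamma$. $W$ is a Hilbert space, being a weighted Bochner space with values in the Hilbert space $H_0^1(D)$. We must verify four things: the bilinear form $\widetilde b$ in \eqref{eq12} is bounded, it is $W$-elliptic, $\widetilde \ell \in W'$, and $\widetilde{\mathcal{K}}$ is nonempty, closed and convex.

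Boundedness and ellipticity follow directly from Assumption~\ref{assumption5.4}. For boundedness, we use $\widetilde a \le \widetilde a_{\max}$ pointwise and then apply the Cauchy--Schwarz inequality, first in $L^2(D)$ and then in $L^2_p(\Gamma)$, to get $|\widetilde b(u,v)| \le \widetilde a_{\max}\|u\|_W\|v\|_W$. For ellipticity, $\widetilde a \ge \widetilde a_{\min}$ gives $\widetilde b(v,v) \ge \widetilde a_{\min}\|v\|_W^2$. For $\widetilde\ell$, Cauchy--Schwarz together with the Poincar\'e inequality $\|v(\cdot,\by)\|_{L^2(D)} \le C_D\|v(\cdot,\by)\|_{H_0^1(D)}$, applied for a.e.\ $\by$, yields $|\widetilde\ell(v)| \le C_D\|\widetilde f\|_{L_p^2(\Gamma;L^2(D))}\|v\|_W$.

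For the constraint set, convexity is immediate because the pointwise constraint $v \ge \widetilde g$ is preserved under convex combinations. Closedness takes a short argument. If $v_n \to v$ in $W$, then by Poincar\'e $v_n \to v$ in $L^2_p(\Gamma;L^2(D))$, so a subsequence converges a.e.\ in $D\times\Gamma$ (with respect to the measure $p\,\mathrm{d}\bx\,\mathrm{d}\by$), and hence $v \ge \widetilde g$ a.e. Throughout, "a.e.\ in $\Gamma$" is understood with respect to the weighted measure, matching how the paper identifies $\Omega$ with $\Gamma$.

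The main obstacle is nonemptiness of $\widetilde{\mathcal{K}}$, which the paper has so far taken implicitly. I would use the standing assumptions: $\widetilde g(\cdot,\by) \in H^1(D)$ with $\widetilde g \in L^2_p(\Gamma;H^1(D))$, and $\widetilde g \le 0$ on $\partial D$. Then take $v = \widetilde g^+ = \max(\widetilde g, 0)$. For a.e.\ $\by$, this lies in $H^1(D)$ with $\|\nabla \widetilde g^+\|_{L^2(D)} \le \|\nabla\widetilde g\|_{L^2(D)}$, and it has vanishing trace because the trace of $\widetilde g^+$ equals $(\text{trace of }\widetilde g)^+ = 0$. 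Hence $\widetilde g^+(\cdot,\by) \in H_0^1(D)$, and integrating against $p$ gives $\widetilde g^+ \in W$ with $\widetilde g^+ \ge \widetilde g$, so $\widetilde{\mathcal{K}} \neq \emptyset$. One should note that the truncated obstacle $g_{M_g}$ inherits the boundary sign condition, or simply assume it for $\widetilde g$; I would state this as an assumption if it is not automatic. Measurability of $\widetilde g^+$ in $(\bx,\by)$ is clear since $\max(\cdot,0)$ is continuous. With all four hypotheses verified, Lemma~\ref{lem1.1} gives existence and uniqueness of the solution of \eqref{eq11}.
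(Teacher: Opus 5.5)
Your proposal is correct and is essentially the argument the paper intends: the paper gives no separate proof of Theorem~\ref{9.43} and relies implicitly on the same application of Lemma~\ref{lem1.1} used for Theorem~\ref{thm1}, now on $W=L_p^2(\Gamma;H_0^1(D))$, namely boundedness and ellipticity of $\widetilde b$ from Assumption~\ref{assumption5.4} and continuity of $\widetilde\ell$. Your extra check that $\widetilde{\mathcal{K}}$ is closed and nonempty fills in hypotheses of Lemma~\ref{lem1.1} that the paper leaves unverified; nonemptiness uses $\max(\widetilde g,0)$ and the boundary sign condition $\widetilde g\le 0$ on $\partial D$, which the paper itself later assumes in the proof of Theorem~\ref{thm4.3}.
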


Since our primary focus is on developing numerical methods for the stochastic obstacle problem rather than modeling the problem itself, we consider synthetic expansions of the form~\eqref{truncate_a}. In these expansions, the pairs \((\sqrt{v_k^a}, \phi_k^a)\) are carefully selected so that the solution representation has specific, desirable properties. {Notably, in our numerical experiments, the coefficient expansions are synthetically constructed and are not derived from predefined covariance operators. Hence, they do not correspond to classical Karhunen-Lo\`eve expansions in the strict sense.}

\section{Finite Element Spaces}\label{sec3}

In this section, we begin by examining standard finite element spaces over the spatial domain
{ \( D \subset \mathbb{R}^n \)} and the outcome set \( \Gamma \subset \mathbb{R}^M \) independently. 
Next, we introduce tensor product finite element spaces on \(D \times \Gamma\), 
which will serve as the foundation for approximating solutions to problem (\ref{eq11}). 
Finally, we conclude with a discussion of key approximation properties of the tensor product finite element spaces.

\subsection{Finite element spaces on the spatial domain}
\setcounter{equation}0

We begin by considering the finite element spaces defined over the spatial domain \(D\). If $D$ is a polyhedral domain, then it can be partitioned into triangles and quadrilaterals. For a general domain, the partition may be more complicated. In this paper, we assume the domain $D$ is a polygon, and consider to partition $D$ into triangles $\tau$. Let \(\{\mathcal{T}_h ^D\}_{h>0}\) represent a regular family of triangulations of \(D\), such that \(\bar{D} = \cup _{\tau \in \mathcal{T}_{h}^D} \bar{\tau}\). Define \(h = \max_{\tau \in \mathcal{T} _h ^D} h_\tau\), where \(h_\tau\) denotes the diameter of the element \(\tau\). 
We assume the triangulations of $D$ are regular, which means there exists a positive constant \(\sigma\) such that \(h_\tau / \rho_\tau \leq \sigma\) for all \(\tau \in \mathcal{T}_h ^D\), where \(\rho_\tau\) is the diameter of the largest inscribed circle in \(\tau\). Moreover, the mesh parameter \(h\) approaches zero (cf.\ \cite[Section 4.4]{Brenner1994}, \cite[page 124]{Ci1978}, \cite[page 411]{Atkinson2009Theoretical}).

Consider the finite-dimensional space \(X^h \subset H_0^1(D)\), consisting of piecewise linear continuous functions defined on \(\mathcal{T}_h ^D\).
We define interpolation operator  \(\Pi_h \) by 
\[\Pi_h \colon H^2(D) \cap H_0^1(D) \to X^h, \qquad \Pi_h v = \sum _{i = 1}^{I} v(\bx _i)\varphi_i, \]
where  $\left\{\bx_{i}\right\}_{i=1}^{I} \subset \bar{D}$ is the set of interior node points in \(D\) and  $\left\{\varphi_{i}\right\}_{i=1}^{I}$ is the corresponding finite element basis functions of the corresponding nodes. Then (\cite[Theorem 10.3.9]{Atkinson2009Theoretical}, \cite{Babuska2004})  the following approximation properties hold:

There exists a constant $C>0$ independent of \(h\) such that for any \(v\in H^2(D) \cap H_0^1(D)\),
\begin{equation}
	\label{interpolation}
	  \|v - \Pi_h v \|_{m,D} \leq C h^{2-m} |v|_{2,D}, \quad m = 0,1.
\end{equation}

\subsection{Tensor product spaces on \(\Gamma\subset \mathbb{R}^M\)}

Next, we consider the finite-dimensional space defined on \(\Gamma \subset \mathbb{R}^M\) (\cite{Babuska2004,Shen2019}). Let $\Gamma = \Pi _{k= 1}^M \Gamma_k$ be as in subsection 2.5. Consider a partition of \(\Gamma\) consisting of a finite number of boxes, $B_{\gamma}= \Pi_{k=1}^{M}[a_k^{\gamma},b_k^{\gamma}] \subset \mathbb{R}^M$, $[a_k^{\gamma},b_k^{\gamma}]\subset \overline{\Gamma_k}$ for \(k = 1, \ldots,M\), such that, for some finite index set \(I\),
\[
\overline{\Gamma} = \bigcup_{\gamma \in I} B_{\gamma} = \bigcup_{\gamma \in I} \prod_{k=1}^{M} [a_k^\gamma, b_k^\gamma],
\]
where \(\mathring{B_{\gamma _ 1}} \cap \mathring{B_{\gamma _2}} = \varnothing\) for distinct \(\gamma _1, \gamma _2 \in I\).
The maximum grid size parameter \(0 < s_k < 1\) is defined as
\[ s_k = \max \left\{b_k^\gamma - a_k^\gamma \,\,|\,\, \gamma \in I \right\}, \]
for \(k = 1,\ldots, M, \) and we set \(\tilde{s} = (s_1,\ldots,s_M)\).
For each nonnegative integer multi-index \(q = \left(q_1, \ldots, q_M\right)\), consider the finite element approximation space of  piecewise polynomials of a degree at most \(q_k\) in each direction \(y_k\), denoted by \(Y_{\tilde{s}}^q \subset L^2(\Gamma)\). If \(\varphi \in Y_{\tilde{s}}^q\), then its restriction to each of the partition boxes satisfies
\[
\left.\varphi\right|_{B_{\gamma}} \in \operatorname{span}\left\{\prod_{k=1}^{M} y_k^{\alpha_k} \,\,|\,\, \alpha_k \in \mathbb{N}, \alpha_k \leq q_k, \, k = 1, \ldots, M\right\}.
\]

{The finite element spaces \(Y_{\tilde{s}}^q\) satisfy (cf.\ \cite[Section 4.6]{Brenner1994}, \cite{Babuska2004}) the following approximation property: let \(\Pi_s\) be the standard tensor product interpolation operator.  Then, there are positive constants \(\{C_k\}_{k=1}^{M}\) depending only on \(q_k\) such that for any \(v \in H^{q+1}(\Gamma)\),
\begin{equation*}
\|v-\Pi_s v\|_{L^2(\Gamma)}\le\sum_{k=1}^{M} C_k {s_k}^{q_k+1}\|\partial_{y_k}^{q_k+1} v\|_{L^2(\Gamma)}.
\end{equation*}
}

Similarly for \(\mathcal{T}_h ^D\), we will use the \(\mathcal{T}_{\tilde{s}} ^\Gamma\) to denote the division of \(\Gamma\) with respect to \(\tilde{s}\).

Additionally, if we define \(s = \max \{| b_k^\gamma - a_k^\gamma| \colon \gamma \in I, k = 1, \dots, M\}\),  it follows that
\begin{equation}
	\label{Tensor_interpolation}
	\|v - \Pi_s v\|_{L^2(\Gamma)} \leq s^{\lambda} \sum_{k=1}^{M} C_k \|\partial_{y_k}^{q_k+1} v\|_{L^2(\Gamma)},
\end{equation}
where \(\lambda = \min_{1 \leq j \leq M} \{q_j + 1\}\).

In this paper, we select piecewise polynomials of degree 1 in each direction \(y_n\), i.e., \(q = (1, \dots, 1)\), and for simplicity, we denote \(Y^s = Y_{\tilde{s}}^q\).

Having defined the finite element spaces on \(D\) and \(\Gamma\), we now construct the tensor product space over the product domain \(D \times \Gamma.\)

\subsection{Tensor product finite element spaces on \(D \times \Gamma\)}

Combining the finite element spaces \(X^h\) and \(Y^s\), we construct the following tensor product space:
\[
	 X^h \otimes Y^s  =   \operatorname{span}  \left\{ \varphi _i (\bx) \psi _j(\by) \mid \varphi _i(\bx) \in X^h,  \psi _j(\by) \in Y^s \right\}.
\]

{
Let $\{\boldsymbol{x}_i\}_{i=1}^I \subset \bar{D}$ be the set of interior spatial nodes from the triangulation $\mathcal{T}_h^D$ and $\{\boldsymbol{y}_j\}_{j=1}^J \subset \Gamma$ be the set of parameter nodes from the partition $\mathcal{T}_s^\Gamma$, which are the vertices of the boxes $B_\gamma$. 
The finite element nodes of $X^h \otimes Y^s$ are obtained as the Cartesian product of the spatial and parameter nodes:
\[
\{(\boldsymbol{x}_i, \boldsymbol{y}_j) \in D \times \Gamma \mid i = 1,\ldots,I, \ j = 1,\ldots,J\}.
\]
The corresponding basis functions of $X^h\otimes Y^s$ are given by the tensor products
\[
\Phi_{ij}(\boldsymbol{x}, \boldsymbol{y}) = \varphi_i(\boldsymbol{x}) \psi_j(\boldsymbol{y}), \quad i = 1,\ldots,I, \ j = 1,\ldots,J,
\]
where $\varphi_i$ are the piecewise linear finite element basis functions of $X^h$ and $\psi_j$ are the piecewise multilinear finite element basis functions of $Y^s$. This tensor product framework generates a total of $I \times J$ basis functions, facilitating the approximation of solutions over the product domain $D \times \Gamma$.}
\section{Stochastic Finite Element Methods}\label{sec4}
\setcounter{equation}0

In this section, we consider a fully discrete method. In the fully discrete method, we discretize both the spatial and parameter domains.

\subsection{Fully discrete method}

Let $W^{hs} = X^h \otimes Y^s$ and define
\[ \widetilde{\mathcal{K}}^{hs} =  \left\{ { v^{hs} \in W^{hs}} \mid \right.\\
   \left.   v^{hs}(\bx,\by)  \geq \widetilde{g}(\bx, \by)  \text{ for any node } (\bx, \by)   \right\}.\]

Then, the fully discrete method to solve (\ref{eq11}) is
\begin{equation}  \label{eq4.1}
	\widetilde{u}^{hs} \in \widetilde{\mathcal{K}}^{hs}, \quad	\widetilde b( \widetilde u^{hs}, v^{hs} - \widetilde u^{hs}) \geq  \widetilde \ell(v^{hs} - \widetilde u^{hs}), \quad \forall { v^{hs} \in \widetilde{\mathcal{K}}^{hs}},
	\end{equation}
where  $\widetilde{b}\colon W \times W \to \mathbb{R}$  and  $\widetilde{\ell}\colon W \to \mathbb{R}$  are defined as in (\ref{eq12}).

{ Clearly, $\widetilde{\mathcal{K}}^{hs}$ is a closed convex subset of $W^{hs}$.
}

\begin{theorem}\label{thm4.1}
 If the conditions of Theorem~\ref{9.43} hold, then (\ref{eq4.1}) admits a unique solution.
\end{theorem}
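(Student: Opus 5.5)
We apply Lemma~\ref{lem1.1} with the Hilbert space $V = W^{hs}$, equipped with the norm inherited from $W = L_p^2(\Gamma; H_0^1(D))$, i.e.
\[
\|v\|_W^2 = \int_\Gamma p(\by)\,\|v(\cdot,\by)\|_{H_0^1(D)}^2 \,\mathrm{d}\by .
\]
We use the convex set $\widetilde{\mathcal{K}}^{hs}$, the bilinear form $\widetilde b$ restricted to $W^{hs}\times W^{hs}$, and the functional $\widetilde \ell$ restricted to $W^{hs}$. Since $W^{hs}= X^h\otimes Y^s$ is finite dimensional, it is a closed subspace of $W$ and hence a Hilbert space. (The functions $\varphi_i\psi_j$ belong to $W$ because $\varphi_i\in H_0^1(D)$ and $\psi_j$ is bounded on $\Gamma$.)

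First, continuity and ellipticity of $\widetilde b$ follow exactly as in the proof of Theorem~\ref{thm1}. Assumption~\ref{assumption5.4} gives
\[
|\widetilde b(u,v)|\le \widetilde a_{\max}\|u\|_W\|v\|_W, \qquad \widetilde b(v,v)\ge \widetilde a_{\min}\|v\|_W^2 ,
\]
and restricting to $W^{hs}$ preserves both bounds. Continuity of $\widetilde\ell$ follows from the Cauchy--Schwarz inequality in $L_p^2(\Gamma;L^2(D))$ together with the Poincar\'e inequality:
\[
|\widetilde \ell(v)|\le C_D\|\widetilde f\|_{L^2_p(\Gamma;L^2(D))}\|v\|_W .
\]

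Second, we verify that $\widetilde{\mathcal{K}}^{hs}$ is non-empty, closed and convex. Convexity and closedness are immediate. The set is cut out by finitely many constraints of the form $v^{hs}(\bx_i,\by_j)\ge \widetilde g(\bx_i,\by_j)$. On the finite-dimensional space $W^{hs}$, the map $v^{hs}\mapsto v^{hs}(\bx_i,\by_j)$ is a linear functional, namely the $(i,j)$ coefficient in the nodal basis $\Phi_{ij}$, and is therefore continuous. So $\widetilde{\mathcal{K}}^{hs}$ is a finite intersection of closed half-spaces.

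For non-emptiness we need $\widetilde g$ to have well-defined nodal values; this is implicitly assumed in the definition of $\widetilde{\mathcal{K}}^{hs}$, for instance if $\widetilde g$ is continuous, as it is for the truncated expansion \eqref{truncate_g} with continuous $\mu_g$ and $\phi_k^g$. We then take
\[
v^{hs}=\sum_{i,j} c_{ij}\Phi_{ij}, \qquad c_{ij}=\max\{\widetilde g(\bx_i,\by_j),0\}.
\]
Because $\{\Phi_{ij}\}$ is a nodal (Lagrange) basis, $v^{hs}(\bx_i,\by_j)=c_{ij}\ge \widetilde g(\bx_i,\by_j)$ at every node. Only interior spatial nodes are constrained, and boundary values are zero, which is consistent with the compatibility assumption $g\le 0$ on $\partial D$. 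Hence $v^{hs}\in\widetilde{\mathcal{K}}^{hs}$.

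With all hypotheses verified, Lemma~\ref{lem1.1} yields existence and uniqueness of $\widetilde u^{hs}$. The only step needing any care is the non-emptiness and closedness of the discrete constraint set. This hinges on the nodal-interpolation property of the tensor basis and on pointwise evaluability of $\widetilde g$ at the nodes; I would state the latter explicitly as part of the setting.
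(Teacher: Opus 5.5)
Your proof is correct and follows the route the paper has in mind. The paper writes no proof: it only notes that $\widetilde{\mathcal{K}}^{hs}$ is a closed convex subset of $W^{hs}$ and relies on Lemma~\ref{lem1.1} together with the bounds from Assumption~\ref{assumption5.4}. Your explicit check that $\widetilde{\mathcal{K}}^{hs}$ is non-empty, and your remark that $\widetilde g$ must have well-defined nodal values for this, supply a detail the paper leaves implicit.
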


Now we provide an error analysis for the fully discrete method.  For this purpose, we recall
the following generalized Céa lemma (\cite[page 445]{Atkinson2009Theoretical}).

\begin{lemma}[\cite{Atkinson2009Theoretical}]	\label{lem6.3}
Let $ V$  be a real Hilbert space and  $K \subset V$  be non-empty, closed and convex.
Let  $V^{h} \subset V $ be a finite element space and  $K^{h} \subset V^{h}$ 
be non-empty, closed and convex. Assume the bilinear form  $b(\cdot, \cdot)$  
is bounded and  $V$-elliptic, and $l \in V^{\prime}$.

If  $u$  is a solution of
\[	u \in K, \quad b(u, v-u) \geq(f, v-u) \quad \forall v \in K, \]
and  $u^{h} \in K^{h}$  is the finite element solution defined by
\[u^h\in K^h, \quad b\left(u^h,v^h-u^h\right)\ge\left(f,v^h-u^h\right) \quad \forall v^{h} \in K^h,\]
then
\begin{equation}
\left\|u-u^{h}\right\|^{2} \leq c \inf _{v \in K} R\left(v-u^{h}\right)+c \inf _{v^{h} \in K^{h}}\left[R\left(v^{h}-u\right)+c_{1}\left\|u-v^{h}\right\|^{2}\right], \label{4.1a}
\end{equation}
where
\[
	R(v)=b(u, v)-\ell (v).
\]

\end{lemma}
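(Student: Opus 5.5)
The generalized Céa lemma (Lemma~\ref{lem6.3}) is the standard estimate of Falk type. I would prove it directly from the two variational inequalities, with coercivity constant $\alpha$ and boundedness constant $M_b$ of $b$, and with $R(v)=b(u,v)-\ell(v)$. The plan is to start from ellipticity,
\[
\alpha\|u-u^h\|^2\le b(u-u^h,u-u^h)=b(u,u-u^h)-b(u^h,u-u^h),
\]
and then split the term $u-u^h$ so that each piece is controlled by one of the two inequalities. For an arbitrary $v^h\in K^h$ I would write $u-u^h=(u-v^h)+(v^h-u^h)$ inside the second term.

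Since $u^h$ solves the discrete inequality, $-b(u^h,v^h-u^h)\le -\ell(v^h-u^h)$. Combining this with the first term gives
\[
\alpha\|u-u^h\|^2\le b(u,u-u^h)-\ell(v^h-u^h)-b(u^h,u-v^h).
\]
Next I would rewrite $b(u,u-u^h)-\ell(v^h-u^h)=R(u-u^h)+\ell(u-u^h)-\ell(v^h-u^h)=R(u-u^h)+\ell(u-v^h)$. The term $b(u^h,u-v^h)$ I would split as $b(u^h-u,u-v^h)+b(u,u-v^h)$. Collecting, we get
\[
\alpha\|u-u^h\|^2\le R(u-u^h)+R(v^h-u)+b(u-u^h,v^h-u)\cdot(-1)\cdot(-1)^{0}\,,
\]
that is, $R(u-u^h)+R(v^h-u)+b(u-u^h,u-v^h)$ up to sign. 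In any case this last term is bounded by $M_b\|u-u^h\|\|u-v^h\|$.

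For the first residual I would use that $u$ solves the continuous inequality, so $R(w-u)\ge0$ for every $w\in K$. Hence
\[
R(u-u^h)=R(v-u^h)-R(v-u)\le R(v-u^h)\qquad \forall\, v\in K.
\]
Young's inequality then gives $M_b\|u-u^h\|\|u-v^h\|\le\frac{\alpha}{2}\|u-u^h\|^2+\frac{M_b^2}{2\alpha}\|u-v^h\|^2$. Absorbing the first piece into the left-hand side and taking infima over $v\in K$ and $v^h\in K^h$ (these infima are independent of each other) yields \eqref{4.1a} with $c=2/\alpha$ and $c_1=M_b^2/(2\alpha)$.

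The only delicate point is the sign bookkeeping in the splitting; getting the signs right is exactly what makes both residual terms appear with a plus sign. Conformity $K^h\subset K$ is not needed, which is why the term $\inf_{v\in K}R(v-u^h)$ appears. Note also that the notation $(f,\cdot)$ in the statement coincides with $\ell$.
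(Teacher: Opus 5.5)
Your argument is correct and is essentially the standard Falk-type proof from the cited reference; the paper states this lemma without proof and defers to Atkinson--Han, and your derivation gives the constants $c=2/\alpha$ and $c_1=M_b^2/(2\alpha)$. One cosmetic point: drop the garbled factor $(-1)\cdot(-1)^{0}$ and the hedge ``up to sign'', since your algebra yields exactly $\alpha\|u-u^h\|^2\le R(u-u^h)+R(v^h-u)+b(u-u^h,u-v^h)$.
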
 

Observe that if $K^h \subset K$, then the inequality \eqref{4.1a} is reduced to 
\[\lVert u - u^h \rVert ^2 \leq c \inf _{v^{h} \in K^{h}}\left[R\left(v^{h}-u\right)+c_{1}\left\|u-v^{h}\right\|^{2}\right].  \] 

{
\begin{theorem}\label{thm4.3}
Let Assumption \ref{assumption5.4} hold, and let $\widetilde{u}$ and $\widetilde{u}^{hs}$ be solutions of the problem (\ref{eq11}) and (\ref{eq4.1}). Assume  $\widetilde{f} \in L_{p}^{2}\left(\Gamma; L^{2}(D)\right), \widetilde{u} \in H_p^2(\Gamma; H^2(D))$ and $ \widetilde{g} \in {H_p^{2}\left(\Gamma; H^{2}(D)\right)}$. Then 
\[	
	\left\|\widetilde{u}-\widetilde{u}^{hs}\right\|_{W} \leq  c (h+s) ,	
\]
where  $c$ depending only on $ \lVert\widetilde{u}\rVert_{H _p^{2}\left(\Gamma; H^{2}(D)\right)}, \lVert\widetilde{f}\rVert_{L_p^2(\Gamma;L^{2}(D))}$ and $ \lVert\widetilde{g}\rVert_{H _p^{2}\left(\Gamma; H^{2}(D)\right)}$.
\end{theorem}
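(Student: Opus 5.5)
We apply the generalized Céa lemma, Lemma~\ref{lem6.3}, with $V=W$, $K=\widetilde{\mathcal{K}}$, $V^h=W^{hs}$, $K^h=\widetilde{\mathcal{K}}^{hs}$ and $R(v)=\widetilde b(\widetilde u,v)-\widetilde\ell(v)$. The discrete set is in general not contained in $\widetilde{\mathcal{K}}$, because the constraint is imposed only at the nodes. Both infima in \eqref{4.1a} therefore have to be estimated.

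\textbf{Residual functional.} Since $\widetilde u\in H_p^2(\Gamma;H^2(D))$, integration by parts in $\bx$ for a.e.\ $\by$ gives
\[
R(v)=\int_\Gamma p\int_D \big(-\mathrm{div}(\widetilde a\nabla\widetilde u)-\widetilde f\big)\,v\,\mathrm{d}\bx\,\mathrm{d}\by .
\]
Writing $\sigma:=-\mathrm{div}(\widetilde a\nabla\widetilde u)-\widetilde f$, we get $|R(v)|\le \|\sigma\|_{L^2_p(\Gamma;L^2(D))}\|v\|_{L^2_p(\Gamma;L^2(D))}$. This step needs $\widetilde a$ to be Lipschitz in $\bx$, which holds for the KL-type expansions used here; I will assume it. By the complementarity conditions \eqref{eqe3}, holding pointwise in $\by$, we have $\sigma\ge0$ and $\sigma(\widetilde u-\widetilde g)=0$ a.e. The whole argument rests on this sign structure.

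\textbf{Second term.} Take $v^{hs}=\Pi_{hs}\widetilde u:=\Pi_h\Pi_s\widetilde u$, the tensor-product nodal interpolant. It lies in $\widetilde{\mathcal{K}}^{hs}$ because $\widetilde u\ge\widetilde g$ at every node. By \eqref{interpolation} and \eqref{Tensor_interpolation}, applied in each variable and combined with the triangle inequality (interpolation in $\by$ of an $H^1_0(D)$-valued function, then in $\bx$),
\[
\|\widetilde u-\Pi_{hs}\widetilde u\|_W\le c(h+s)\|\widetilde u\|_{H^2_p(\Gamma;H^2(D))}.
\]
Hence $c_1\|\widetilde u-v^{hs}\|_W^2=O((h+s)^2)$. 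The residual term is also $O((h+s)^2)$: bounding $|R(v^{hs}-\widetilde u)|$ by $\|\sigma\|\,\|\Pi_{hs}\widetilde u-\widetilde u\|_{L^2}$ gives $O(h^2+s^2)$, since in $L^2$ the spatial interpolation error is $O(h^2)$ and the parametric error is $O(s^2)$ by \eqref{Tensor_interpolation} with $q=(1,\dots,1)$.

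\textbf{First term (main obstacle).} We must bound $\inf_{v\in\widetilde{\mathcal{K}}}R(v-\widetilde u^{hs})$ by $O((h+s)^2)$. My first attempt: set $v=\max(\widetilde u^{hs},\widetilde g)\in\widetilde{\mathcal{K}}$. Then
\[
R(v-\widetilde u^{hs})=\int_\Gamma p\int_D\sigma\,(\widetilde g-\widetilde u^{hs})^+ .
\]
Because $\widetilde u^{hs}\ge\Pi_{hs}\widetilde g$ (the interpolant is taken at the nodes, and the basis is piecewise (multi)linear and nonnegative), we have $(\widetilde g-\widetilde u^{hs})^+\le|\widetilde g-\Pi_{hs}\widetilde g|$. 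This yields
\[
R(v-\widetilde u^{hs})\le\|\sigma\|_{L^2_p(\Gamma;L^2(D))}\,\|\widetilde g-\Pi_{hs}\widetilde g\|_{L^2_p(\Gamma;L^2(D))}\le c(h^2+s^2),
\]
using the $L^2$ interpolation estimates for $\widetilde g\in H^2_p(\Gamma;H^2(D))$. The delicate points are the measurability and admissibility of the max-construction in $W$, and checking that $\|\sigma\|$ is controlled by $\|\widetilde u\|_{H^2_p(\Gamma;H^2(D))}$ and $\|\widetilde f\|$.

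\textbf{Conclusion.} Combining the two bounds in \eqref{4.1a} gives $\|\widetilde u-\widetilde u^{hs}\|_W^2\le c(h+s)^2$. Taking square roots gives the claim, with a constant depending on the stated norms of $\widetilde u$, $\widetilde f$ and $\widetilde g$.
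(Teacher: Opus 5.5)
Your proposal is correct and follows essentially the same route as the paper. Both use the generalized C\'ea lemma (Lemma~\ref{lem6.3}) with the residual bounded by integration by parts and Cauchy--Schwarz, the tensor interpolant $\Pi_s\Pi_h\widetilde u\in\widetilde{\mathcal{K}}^{hs}$ for the second infimum, and Falk's construction $\max\{\widetilde u^{hs},\widetilde g\}$ together with $\widetilde u^{hs}\ge\Pi_s\Pi_h\widetilde g$ for the first. The sign $\sigma\ge0$ you invoke is not actually needed, since the paper uses only the Cauchy--Schwarz bound $\|(\widetilde g-\widetilde u^{hs})^+\|\le\|\widetilde g-\Pi_s\Pi_h\widetilde g\|$, and the Lipschitz regularity of $\widetilde a$ you flag is an assumption the paper uses implicitly.
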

\begin{proof}
Recall that $W = L_{p}^{2}\left(\Gamma; H_0^{1}(D)\right)$. Assumption \ref{assumption5.4} implies
\[\widetilde{a}_{\min }\left\|\widetilde{u}-\widetilde{u}^{hs}\right\|_{W}^2 \leq \widetilde{b}\left(\widetilde{u}-\widetilde{u}^{hs}, \widetilde{u}-\widetilde{u}^{hs}\right) \leq \widetilde{a}_{\max }\left\|\widetilde{u}-\widetilde{u}^{hs}\right\|_{W}^2 .
\]
Denote
\[
	\widetilde{R}(v)=\widetilde{b}\left(\widetilde{u}, v\right)- \widetilde{\ell} \left(v\right).
\]
First, we bound the term $\widetilde{R}(v)$:
\begin{align*}
\widetilde{R}(v) &= \widetilde{b}\left(\widetilde{u}, v\right)- \widetilde{\ell}\left(v\right) \\
		&= \int_{\Gamma} p(\by) \int_{D} \left[ \widetilde{a}(\bx,\by) \,\nabla \widetilde{u} \cdot \nabla v -  \widetilde{f}(v) \right]\,\mathrm{d} \bx\, \mathrm{d} \by.
		\end{align*}
Applying integration by parts, we have 
\begin{align*}
\widetilde{R}(v)& =  \int_\Gamma p(\by) \int_{D}\left[-\operatorname{div}(\widetilde{a}(\bx,\by) \, \nabla \widetilde{u})-\widetilde{f}\right] v\,\mathrm{d} \bx\,  \mathrm{d}  \by \\
& \leq \left\|-\operatorname{div}(\widetilde{a}\,\nabla \widetilde{u})-\widetilde{f}\right\|_{L_p^2(\Gamma;L^{2}(D))}\|v\|_{L_p^2(\Gamma;L^{2}(D))}.
\end{align*}
Lemma~\ref{lem6.3} implies
\begin{align*}
\left\|\widetilde{u}-\widetilde{u}^{hs}\right\|_{W}^{2} & \leq c \inf _{v^{hs} \in \mathcal{K}^{hs}}\left[\widetilde{R}(v^{hs}-\widetilde{u})+\left\|\widetilde{u}-v^{hs}\right\|_{W}^{2}\right] + c \inf _{v \in \mathcal{K}} \widetilde{R}(v-\widetilde{u}^{hs}) \\
&  \leq c\left\{  \inf _{v^{hs} \in \mathcal{K}^{hs}}  \left[\left\|    \widetilde{u}-v^{hs}\right\|_{W}^{2} +   \left\|-\operatorname{div}(\widetilde{a}\, \nabla \widetilde{u})-\widetilde{f}\right\|_{L_p^2(\Gamma;L^{2}(D))}\left\|v^{hs}-\widetilde{u}\right\|_{L_p^2(\Gamma;L^{2}(D))} \right] \right. \\
&\quad{} \left. + \left\|-\operatorname{div}(\widetilde{a}\, \nabla \widetilde{u})-\widetilde{f}\right\|_{L_p^2(\Gamma;L^{2}(D))} \inf _{v \in \mathcal{K}} \left\|v-\widetilde{u}^{hs} \right\|_{L_p^2(\Gamma;L^{2}(D))}\right\}.
\end{align*}
Apparently, \( \Pi_s \Pi_h  \widetilde{u} \in \widetilde{\mathcal{K}}^{hs}\), where \(\Pi_h\) is the continuous piecewise linear interpolation operator \eqref{interpolation}, and \(\Pi_s\) is the tensor product interpolation operator \eqref{Tensor_interpolation}.

According to finite element interpolation error estimates~\eqref{interpolation}, we have 
\[ \|v - \Pi_h v \|_{m,D} \leq C h^{2-m} |v|_{2,D}, \quad m = 0,1.\]
It follows that
\[ \|v - \Pi_h v \|_{L_p^2(\Gamma;H^{m}(D))} \leq C h^{2-m} |v|_{L_p^2(\Gamma;H^2(D))}, \quad m = 0,1.\]
From the triangle inequality of the norm, we have 
\[\lVert\Pi_h \widetilde{u} \rVert_{L _p^{2}\left(\Gamma; L^{2}(D)\right)} \leq c \lVert \widetilde{u} \rVert_{L _p^{2}\left(\Gamma; H^{2}(D)\right)} \text{ and } \lVert\Pi_h \widetilde{u} \rVert_{L_p^{2}\left(\Gamma; H_0^{1}(D)\right)}  \leq c \lVert \widetilde{u} \rVert_{L _p^{2}\left(\Gamma; H^{2}(D)\right)}.\]
Due to \(\partial_y^\alpha\!\big(\Pi_h \widetilde u(\cdot,y)\big) = \Pi_h\!\big(\partial_y^\alpha \widetilde u(\cdot,y)\big)
\  \text{for all multi-indices } \alpha\), we have
\[\|\Pi_h \widetilde u\|_{H_p^2(\Gamma;L^2(D))}^2
 = \sum_{|\alpha|\le 2}\int_\Gamma p(y)\,
\big\|\Pi_h\big(\partial_y^\alpha \widetilde u\big)\big\|_{L^2(D)}^{2}\,dy 
 \leq  c \sum_{|\alpha|\le 2} \lVert \partial_y^\alpha \widetilde u\rVert _{L_p^2(\Gamma;H^2(D))}^2
 = c \lVert \widetilde{u} \rVert _{H_p^2(\Gamma;H^2(D))}^2.\]

 Then 
\[\begin{aligned}
	\lVert \Pi_s \Pi_h  \widetilde{u} -  \widetilde{u}\rVert _{L _p^{2}\left(\Gamma; L^{2}(D)\right)}  &   \leq \lVert \Pi_s \Pi_h  \widetilde{u} -  \Pi_h \widetilde{u}\rVert_{L _p^{2}\left(\Gamma; L^{2}(D)\right)} + \lVert \Pi_h \widetilde{u} -  \widetilde{u}\rVert _{L _p^{2}\left(\Gamma; L^{2}(D)\right)}\\
	& \leq c s^2  \lvert \Pi_h \widetilde{u} \rvert_{H_p^2(\Gamma;L^2(D))} + c h^{2} \, |\widetilde{u}|_{L _p^{2}\left(\Gamma; H^{2}(D)\right)}\\
	& \leq cs^2 \lVert \widetilde{u} \rVert_{H_p^2(\Gamma; H^2(D))} + ch^2 \lVert \widetilde{u} \rVert_{L_p^2(\Gamma;H^2(D))}\\
	& \leq c(s^2 + h^2)\lVert \widetilde{u} \rVert _{H_p^2(\Gamma;H^2(D))}.
\end{aligned}
\]

Similary, we can get 
\[\begin{aligned}
	&\lVert \Pi_s \Pi_h  \widetilde{g} -  \widetilde{g}\rVert _{L _p^{2}\left(\Gamma; L^{2}(D)\right)} \leq c(s^2 + h^2) \lVert \widetilde{g} \rVert_{H_p^2(\Gamma;H^2(D))},\\
	&\lVert \Pi_s \Pi_h  \widetilde{u} -  \widetilde{u}\rVert _{L _p^{2}\left(\Gamma; H_0^1(D)\right)}  \leq c(s^2 + h) \lVert \widetilde{u} \rVert_{H_p^2(\Gamma;H^2(D))}.
\end{aligned}\]

Hence,
\begin{align*}
	\inf _{v^{hs} \in \mathcal{K}^{hs}} &  \left[\left\|    \widetilde{u}-v^{hs}\right\|_{W}^{2} +   \left\|-\operatorname{div}(\widetilde{a}\, \nabla \widetilde{u})-\widetilde{f}\right\|_{L_p^2(\Gamma;L^{2}(D))}\left\|v^{hs}-\widetilde{u}\right\|_{L_p^2(\Gamma;L^{2}(D))} \right]\\
	& \leq \left\|  \widetilde{u}- \Pi_{s}\Pi_{h} \widetilde{u}\right\|_{W}^{2} +   \left\|-\operatorname{div}(\widetilde{a}\, \nabla \widetilde{u})-\widetilde{f}\right\|_{L_p^2(\Gamma;L^{2}(D))}\left\| \Pi_{s} \Pi_{h} \widetilde{u}-\widetilde{u}\right\|_{L_p^2(\Gamma;L^{2}(D))}\\
	& \leq c(h^2 + s^2).
\end{align*}

Following the idea of Falk (\cite{falk1974error}), we bound the $\inf _{v \in \mathcal{K}} \left\|v-\widetilde{u}^{hs} \right\|_{L_p^2(\Gamma;L^{2}(D))}$.

Denote 
\[
	\widetilde{u}^{hs,*}(\bx,\by) = \max \{\widetilde{u}^{hs}(\bx,\by), \widetilde{g}(\bx,\by)\},\quad  (D \times \Gamma)^{*} = \{\bx \in D, \by \in \Gamma \mid \widetilde{u}^{hs}(\bx, \by) < \widetilde{g}(\bx, \by)\},
\]
Then, over { $ D \times \Gamma \setminus  (D \times \Gamma)^{*} $}, $ \widetilde{u}^{hs,*} =  \widetilde{u}^{hs} $. Since $\widetilde{u}^{hs} \in L_p^2(\Gamma;H_0^1(D))$ and $\widetilde{g} \in L_p^2(\Gamma;H^2(D))$ with $\widetilde{g} \leq 0$ on $\partial D \times \Gamma$, clearly, $\widetilde{u}^{hs,*} \in \mathcal{K}$,
\[
	\inf _{v \in \mathcal{K}} \left\|v-\widetilde{u}^{hs} \right\|_{L_p^2(\Gamma;L^{2}(D))} ^2 \leq  \left\| \widetilde{u}^{hs,*}-\widetilde{u}^{hs} \right\|_{L_p^2(\Gamma;L^{2}(D))}^2  = \int_  {(D\times \Gamma)^{*}} | \widetilde{u}^{hs,*}(\bx,\by)-\widetilde{u}^{hs}(\bx,\by) |^2  p(\by) \, \mathrm{d}  \bx \, \mathrm{d} \by. 
\]
Let $ \Pi_s \Pi_h \widetilde{g}$ be the continuous piecewise linear interpolant of $\widetilde{g}$. 
Since the discrete solution satisfies the nodal constraint, at any node $(\bx_i, \by_j)$, $ \widetilde{u}^{hs}(\bx_i, \by_j) \geq \widetilde{g}(\bx_i, \by_j)$, we have $\widetilde{u}^{hs} \geq \Pi_s \Pi _h \widetilde{g}$ in $D \times \Gamma$. 
Hence, over $(D \times \Gamma)^{*}$, we have 
\[
	0 < | \widetilde{u}^{hs}(\bx,\by) - \widetilde{g}(\bx,\by)|  = \widetilde{g}(\bx,\by) - \widetilde{u}^{hs}(\bx,\by) \leq \widetilde{g}(\bx,\by) - \Pi_s \Pi _h \widetilde{g}(\bx,\by).
\]
Thus, we get
\begin{align*}
	\int_  {(D \times \Gamma)^{*}} | \widetilde{u}^{hs,*}(\bx,\by)-\widetilde{u}^{hs}(\bx,\by)  |^2 p(\by) \,\mathrm{d}  \bx \, \mathrm{d}  \by  &\leq \int_  {(D \times \Gamma )^{*}}\,| \widetilde{g}(\bx,\by) - \Pi_s \Pi _h \widetilde{g} (\bx,\by) |^2 p(\by) \, \mathrm{d}  \bx \, \mathrm{d} \by \\
	& \leq \int_  {D \times \Gamma} | \widetilde{g}(\bx,\by) - \Pi_s \Pi _h \widetilde{g}(\bx,\by) |^2  p(\by) \,\mathrm{d}  \bx \, \mathrm{d} \by  \\
	& = \lVert \widetilde{g} - \Pi_s \Pi_h \widetilde{g}  \rVert_{L_p^2(\Gamma; L^2(D))}^2.
\end{align*}
Then 
\[
	\inf _{v \in \mathcal{K}} \left\|v-\widetilde{u}^{hs} \right\|_{L_p^2(\Gamma;L^{2}(D))}  \leq  c (s^2 + h^2) \,|\widetilde{g}|_{H_p^{2}\left(\Gamma; H^{2}(D)\right)}.
\]
Finally, we get the order estimate
\[
	\left\|\widetilde{u}-\widetilde{u}^{hs}\right\|_{W} \leq  c (h + s),
\]
where constant $c > 0$ depending only on   $\lVert \widetilde{u} \rVert _{L _p^{2}\left(\Gamma; H^{2}(D)\right)}$,
$\|\widetilde{f}\|_{H_p^2(\Gamma;L^{2}(D))}$ and $ \lVert \widetilde{g} \rVert_{H_p^{2}\left(\Gamma; H^{2}(D)\right)}$.
\end{proof}
}

In conclusion, under the assumptions made in Theorem~\ref{thm4.3}, for the numerical solution using linear element,
\[ \lVert \widetilde{u} - \widetilde{u}^{hs}\rVert _W \leq c (h+s),\]
where $\widetilde{u}$ and $\widetilde u^{hs}$ are  the solutions of (\ref{eq11}) and (\ref{eq4.1}). 

{ Then, by Jensen's inequality \cite[Theorem 1.6.2]{Durrett2019Probability} we obtain
\begin{align*}
	\lVert \mathbb{E}[\widetilde{u}] - \mathbb{E} [\widetilde{u}^{hs}] \rVert _{H_0^1(D)} ^2  & \leq \mathbb{E} \left[\lVert \widetilde{u} - \widetilde{u}^{hs} \rVert_{H_0^1(D)}^2 \right].
\end{align*}
Using the definition \(\|v\|_{W}^{2}=\mathbb{E}\!\left[\|v\|_{H_0^1(D)}^{2}\right]\), we conclude that
\[\lVert \mathbb{E}[\widetilde{u}] - \mathbb{E} [\widetilde{u}^{hs}] \rVert _{H_0^1(D)} ^2  \leq \lVert \widetilde{u} - \widetilde{u}^{hs} \rVert _W^2. \]}

Taking the square root on both sides,
\begin{align*}
	\lVert \mathbb{E}[\widetilde{u}] - \mathbb{E} [\widetilde{u}^{hs}] \rVert _{H_0^1(D)}  & \leq  \lVert \widetilde{u} - \widetilde{u}^{hs} \rVert _W\\
	& \leq \lVert \widetilde{u} - \widetilde{u}^{h} \rVert _W + \lVert \widetilde{u}^{h} - \widetilde{u}^{hs} \rVert _W\\
	& \leq c(h+s).
\end{align*}
Similarly, we can give the approximation of second moment as in \cite{Babuska2007},
\begin{align*}
	\lVert \mathbb{E}[(\widetilde{u})^2 - (\widetilde{u}^{hs})^2] \rVert_{H_0^1(D)} & = \lVert \mathbb{E}[(\widetilde{u} - \widetilde{u}^{hs})(\widetilde{u} + \widetilde{u}^{hs})] \rVert_{H_0^1(D)} \\
		& =  \lVert \mathbb{E}[(\widetilde{u} - \widetilde{u}^{hs})(2\widetilde{u} -(\widetilde{u} - \widetilde{u}^{hs}))] \rVert_{H_0^1(D)}\\
		& = \lVert 2\mathbb{E}[\widetilde{u}(\widetilde{u} - \widetilde{u}^{hs})] + \mathbb{E}[ (\widetilde{u} - \widetilde{u}^{hs})^2] \rVert_{H_0^1(D)}\\
		& \leq 2 \lVert \widetilde{u} \rVert _W \lVert \widetilde{u} - \widetilde{u}^{hs} \rVert _W +   \lVert \widetilde{u} - \widetilde{u}^{hs} \rVert _W ^2\\
		& \leq c \lVert \widetilde{u} - \widetilde{u}^{hs} \rVert _W  \\
		& \leq c(h+s).
\end{align*}

\subsection{Stochastic Galerkin system}
{ 
We now discuss the nonlinear algebraic system that must be solved to compute mean and variance following the notation of~\cite[page 405]{Lord2014introduction}. Our approximate space is the tensor-product space  $W^{hs} = X^h \otimes Y^s$, where $X^h$ is the finite element space of dimension $I$ and $Y^s$ is the piecewise polynomial space of dimension $J$. Consequently, the total number of degrees of freedom is $IJ$.

Starting from the fully discrete formulation~\eqref{eq4.1},
\begin{align*}
\widetilde{b}(u, v) & =\int_{\Gamma} p(\boldsymbol{y}) \int_{D} \widetilde{a}(\boldsymbol{x}, \boldsymbol{y}) \nabla u(\boldsymbol{x}, \boldsymbol{y}) \cdot \nabla v(\boldsymbol{x}, \boldsymbol{y}) \, \mathrm{d} \bx\, \mathrm{d} \by , \\
\widetilde{\ell}(v) & =\int_{\Gamma} p(\boldsymbol{y}) \int_{D} \widetilde{f}(\boldsymbol{x}, \boldsymbol{y}) v(\boldsymbol{x}, \boldsymbol{y}) \, \mathrm{d} \bx\, \mathrm{d} \by.
\end{align*}
The parameter fields are represented by the truncated Karhunen-Lo\`eve expansion \eqref{truncate_a} and \eqref{truncate_f}
\[ \widetilde{a}(\boldsymbol{x}, \boldsymbol{y}) = \mu_{a}(\boldsymbol{x})+\sum_{k=1}^{M_a} \sqrt{v_{k}^{a}} \phi_{k}^{a}(\boldsymbol{x}) y_{k}, \quad
\widetilde{f}(\boldsymbol{x}, \boldsymbol{y}) = \mu_{f}(\boldsymbol{x})+\sum_{k=1}^{M_f} \sqrt{v_{k}^{f}} \phi_{k}^{f}(\boldsymbol{x}) y_k.
\]
We seek $\widetilde u^{hs} \in \widetilde{\mathcal{K}}^{hs}$ such that
\[\widetilde u^{hs}\in \widetilde{\mathcal{K}}^{hs}, \quad \widetilde b( \widetilde u^{hs}, v-\widetilde u^{hs}) \geq  \widetilde \ell(v- \widetilde u^{hs}), \quad \forall v \in \widetilde{\mathcal{K}}^{hs}.
\]

By expanding  $\widetilde{u}^{hs}$  as in~\eqref{eq4.1},  $\widetilde{u}^{hs}(\boldsymbol{x}, \boldsymbol{y}) = \sum_{i=1}^{I} \sum_{j=1}^{J} {u}_{ij} \phi_{i}(\boldsymbol{x}) \psi_{j}(\boldsymbol{y})$
and testing against all basis functions $v^{hs} = \phi_{r} \psi_{t}$ for  $r=1, \ldots, I$  and  $t=1, \ldots, J$. This process yields a matrix $A \in \mathbb{R}^{IJ \times IJ}$ representing the discretized bilinear form and a vector $\boldsymbol{b} \in \mathbb{R}^{IJ}$ representing the discretized linear form:
\[\begin{aligned}
    [A]_{(r,t),(i,j)} &= \tilde{b}(\phi_i\psi_j, \phi_r\psi_t), \\
    [\mathbf{b}]_{(r,t)} &= \tilde{\ell}(\phi_r\psi_t),
\end{aligned}
 \quad \text{for } r, i = 1, \ldots, I \text{ and } t, j = 1, \ldots, J.\]
This results in a block-structured matrix and corresponding vectors: 
\[
	A=\left(\begin{array}{cccc}
		A_{11} & A_{12} & \ldots & A_{1 J} \\
		A_{21} & A_{22} & \ldots & A_{2 J} \\
		\vdots & \vdots & \ddots & \vdots \\
		A_{J 1} & A_{J 2} & \ldots & A_{J J}
		\end{array}\right), \quad \boldsymbol{u}=\left(\begin{array}{c}
		\boldsymbol{u}_{1} \\
		\boldsymbol{u}_{2} \\
		\vdots \\
		\boldsymbol{u}_{J}
		\end{array}\right),
		\quad \boldsymbol{b}=\left(\begin{array}{c}
		\boldsymbol{b}_{1} \\
		\boldsymbol{b}_{2} \\
		\vdots \\ 
		\boldsymbol{b}_{J}
		\end{array}\right).
\]
The  $j^{\rm th}$ block of the solution vector is given by 
\[
	\boldsymbol{u}_{j} = \left[u_{1 j}, u_{2 j}, \ldots, u_{I j}\right]^{\top}, \quad j=1, \ldots, J.
\]

The constituent blocks are given by:
\[\begin{aligned}
	A_{t j} &= \left\langle\psi_{j}, \psi_{t}\right\rangle_{p} K_{0}+\sum_{k=1}^{M_a}\left\langle y_{k} \psi_{j}, \psi_{t}\right\rangle_{p} K_{k},\\
	\boldsymbol{b}_{t} &= \left\langle \psi_{t},1\right\rangle_{p}\boldsymbol{f}_{0}+\sum_{k=1}^{M_f}\left\langle y_{k}, \psi_{t}\right\rangle_{p} \boldsymbol{f}_{k},
\end{aligned}
\quad \text{for}\  t, j = 1, \ldots, J.
	\]
The finite element matrices $K_{0}, K_k \in \mathbb{R}^{I\times I}$ and vector $\boldsymbol{f}_{0}, \boldsymbol{f}_{k} \in \mathbb{R}^I$ are defined as:
\[
	\begin{aligned}
		\left[K_{0}\right]_{r i} & =  \int_{D} \mu_{a}(\boldsymbol{x}) \nabla \phi_{i}(\boldsymbol{x}) \cdot \nabla \phi_{r}(\boldsymbol{x}) \, \mathrm{d} \bx,  
		\quad
		&\left[K_{k}\right]_{r i} & = \int_{D}\left(\sqrt{v_{k}^{a}} \phi_{k}^{a}(\boldsymbol{x})\right) \nabla \phi_{i}(\boldsymbol{x}) \cdot \nabla \phi_{r}(\boldsymbol{x})\, \mathrm{d} \bx,\\
		\left[\boldsymbol{f}_{0}\right]_{r}  & = \int_{D} \mu_{f}(\boldsymbol{x}) \phi_{r}(\boldsymbol{x})\,\mathrm{d} \bx,  
		\quad
		&\left[\boldsymbol{f}_{k}\right]_{r}  & = \int_{D}\left(\sqrt{v_{k}^{f}} \phi_{k}^{f}(\boldsymbol{x})\right) \phi_{r}(\boldsymbol{x}) \,\mathrm{d} \bx.
		\end{aligned}
\]

This block structure admits a compact Kronecker product representation (\cite{Powell2009,Ernst2009}):
\[\begin{aligned}
	&A = G_{0} \otimes K_{0}+\sum_{k=1}^{M_a} G_{k} \otimes K_{k}, \\
	&\boldsymbol{b} = \boldsymbol{g}_{0} \otimes \boldsymbol{f}_{0} +\sum_{k=1}^{M_f} \boldsymbol{g}_{k} \otimes \boldsymbol{f}_{k},
	\end{aligned}
	\]
where the matrices $G_{0},G_k \in \mathbb{R}^{J \times J}$ and vectors  $\boldsymbol{g}_{0}, \boldsymbol{g}_{k} \in \mathbb{R}^J$ are defined by:
\[
	\begin{aligned}
		\left[G_{0}\right]_{j t}&=\left\langle\psi_{j}, \psi_{t}\right\rangle_{p},   \quad
		&\left[G_{k}\right]_{j t}&=\left\langle y_{k} \psi_{j}, \psi_{t}\right\rangle_{p},\\
		\left[\boldsymbol{g}_{0}\right]_{t} &= \left\langle\psi_{t},{1}\right\rangle_{p}, \quad
		&\left[\boldsymbol{g}_{k}\right]_{t} &= \left\langle y_{k},\psi_{t}\right\rangle_{p}.
		\end{aligned}
\]

Crucially, the discretization of the variational inequality \eqref{eq4.1} is not the linear system \( A\mathbf{u} = \mathbf{b} \), but rather the nonlinear complementarity problem:
\[
\mathbf{u} \geq \mathbf{g}, \quad A\mathbf{u} - \mathbf{b} \geq \mathbf{0}, \quad (\mathbf{u} - \mathbf{g})^{\top}(A\mathbf{u} - \mathbf{b}) = 0,
\]
where \( \mathbf{g} \in \mathbb{R}^{IJ} \) is the vector of nodal values of the obstacle, \( g_{ij} = \widetilde{g}(\bx_i, \by_j)\) and the last condition implies component-wise complementarity:
\[
(u_{ij} - g_{ij})[A\mathbf{u} - \mathbf{b}]_{ij} = 0 \quad i=1,\ldots,I \text{ and } j = 1, \ldots, J.
\] }

When adapting the active set method (\cite{Hueber2005, Karkkainen2003}) to solve the variational inequality (\ref{eq4.1}), {at any node \((\boldsymbol{x}_{i'}, \boldsymbol{y}_{j'})\)}, we need only satisfy the condition:
\[
\widetilde{u}^{hs}(\boldsymbol{x}_{i'}, \boldsymbol{y}_{j'}) = \sum_{i=1}^{I} \sum_{j=1}^{J} u_{ij} \phi_{i}(\boldsymbol{x}_{i'}) \psi_{j}(\boldsymbol{y}_{j'}) \geq g(\bx_{i'}, \by_{j'} ),
\]
which is equivalent to the condition \(u_{ij} \geq g_{ij}\) for \(i = 1, \dots, I\) and \(j = 1, \dots, J\).

\section{Numerical Experiments}\label{sec5}
\setcounter{equation}0

In applications, specific statistics and moments of  \(\widetilde{u}^{hs}\) are of interest, and this is also the primary goal of our computations.

Recall that \[L_p^2(\Gamma) \coloneqq \{v\colon\Gamma\to \mathbb{R} \mid \lVert v \rVert _{L_p^2(\Gamma)}<\infty\}, \quad \lVert v \rVert _{L_p^2(\Gamma)}^2 \coloneqq \langle v,v \rangle _p, \]
and 
\begin{equation}\label{notation}
\langle v,w \rangle _p \coloneqq \int_\Gamma p(\boldsymbol y)v(\boldsymbol y)w(\boldsymbol y)\,\mathrm{d} \boldsymbol y.
\end{equation}

Let $X^{h}\subset H_{0}^{1}(D)$ denote any  $I$-dimensional finite element subspace with
\[
	X^{h}=\operatorname{span}\left\{\phi_{1}, \phi_{2}, \ldots, \phi_{I}\right\}
\]
and $Y^{s} \subset   L_{p}^{2}(\Gamma)$  with
\[
	Y^{s}=\operatorname{span}\left\{\psi_{1}, \psi_{2}, \ldots, \psi_{J}\right\}.
\]

The tensor product space is defined as
\begin{equation}
	\label{eq5.2}
	X^{h} \otimes Y^{s} \coloneqq \operatorname{span}\left\{\phi_{i} \psi_{j}\colon i=1, \ldots, I, j=1, \ldots, J \right\}
\end{equation}
and it has dimension $IJ$. 

Any function $w \in W^{hs}$ is of the form
\begin{equation}
	w(\boldsymbol{x}, \boldsymbol{y}) = \sum_{i=1}^{I} \sum_{j=1}^{J} w_{i j} \phi_{i}(\boldsymbol{x}) \psi_{j}(\boldsymbol{y}).
	\label{eq16}
\end{equation}
In particular, write
\begin{align*}
\widetilde{u}^{hs}(\boldsymbol{x}, \boldsymbol{y}) &= \sum_{i=1}^{I} \sum_{j=1}^{J} u_{i j} \phi_{i}(\boldsymbol{x}) \psi_{j}(\boldsymbol{y}) = \sum_{j = 1}^{J} \left(\sum_{i = 1}^{I} u_{ij}\phi _i (\boldsymbol{x})\right)\psi_j(\boldsymbol{y})\\
  & = \sum_{j = 1}^{J} \widetilde{u}_{j}(\boldsymbol{x})  \psi_j(\boldsymbol{y}),
\end{align*}
where the coefficients  $u_{i j}$  are found by solving a nonlinear system of dimension  $IJ$ {due to the complementarity condition.} Then with the notation (\ref{notation}), we get
\begin{align*}
	\mathbb{E}\left[\widetilde{u}^{hs}\right] &= \int_{\Gamma}p(\boldsymbol{y})\widetilde{u}^{hs}(\boldsymbol{x}, \boldsymbol{y}) d \boldsymbol{y} = \sum_{i=1}^{I} \sum_{j=1}^{J} u_{i j} \phi_{i}(\boldsymbol{x}) \int_{\Gamma}p(\boldsymbol{y}) \psi_{j}(\boldsymbol{y}) \,\mathrm{d}\boldsymbol{y} \\
& =  \sum_{j=1}^{J} \widetilde{u}_j(\boldsymbol{x}) \int_{\Gamma} p(\boldsymbol{y})\psi_j(\boldsymbol{y})\,\mathrm{d} \boldsymbol{y} = \sum_{j=1}^{J}\widetilde{u}_j \left\langle\psi_j, 1 \right\rangle_{p}.
\end{align*}

Similarly, the variance is
\[
	\operatorname{Var}\left(\widetilde{u}^{hs}\right)=\mathbb{E}\left[({\widetilde{u}^{hs}})^{2}\right]-\mathbb{E}\left[\widetilde{u}^{hs}\right]^{2}.
\]
Expanding $ \widetilde{u}^{hs}$  gives
\[
	\begin{aligned}
		\mathbb{E}\left[(\widetilde{u}^{hs})^{2}\right] & =\int_{\Gamma} p(\boldsymbol{y}) \widetilde{u}^{hs}(\boldsymbol{x}, \boldsymbol{y})^{2} \,\mathrm{d} \boldsymbol{y}=\left\langle\widetilde{u}^{hs}, \widetilde{u}^{hs}\right\rangle_{p} \\
		 & = \sum_{k = 1}^{J} \sum_{j=1}^{J} \widetilde{u}_k \widetilde{u}_j \left\langle \psi_k,\psi_j\right\rangle_{p}.
		\end{aligned}
\]
Hence, we have
\[
	{\operatorname{Var}\left(\widetilde{u}^{hs}\right)}= \sum_{k = 1}^{J} \sum_{j=1}^{J} \widetilde{u}_k \widetilde{u}_j (\left\langle \psi_k,\psi_j\right\rangle_{p} - \left\langle \psi_k,1\right\rangle_{p}\left\langle \psi_j,1\right\rangle_{p}).
\]

To demonstrate the effectiveness of the proposed method and to verify the theoretical convergence rates, we present two numerical examples. 
Example~\ref{exm1} addresses a model with a stochastic diffusion coefficient, whereas Example~\ref{exm2} deals with a stochastic source term. 
These examples are designed to test the method under different types of stochasticity. The numerical results in both cases show excellent agreement with the theoretical predictions.

\begin{example}
	\label{exm1}
Let $ D=(-1.5,1.5)^2$ and 

\[ a(x_1,x_2, \omega) = 1 + \exp(\xi_1) + 2\exp(\xi_2),
\]
\[{f}(x_1,x_2,\omega) = -2,
\]
where $\xi_1, \xi_2 \thicksim \text{i.i.d.} \, \mathrm{U}(- 1, 1)$, and the obstacle function $g = 0$. The exact solution is
\[
	u(x_1,x_2,\xi_1,\xi_2) = \begin{cases}
		0, & x_1^2 + x_2^2 \leq 1, \\
	(\frac{x_1^2+x_2^2}{2}-\frac{\log(x_1^2+x_2^2)}{2}-\frac{1}{2})* \frac{1}{1+\exp({\xi_1})+2\exp({\xi_2})}, & x_1^2+x_2^2 >1.
	\end{cases}
\]
\end{example}

Tables~\ref{table1} and~\ref{table3} present the $L^2(D)$-norm and $H^1(D)$-norm relative errors between the expected values of the solution $u$ and its numerical approximation $u^{hs}$, computed as
\[e_{L^2}^{rel,1}(u,u^{hs}) =  \frac{\big\| \mathbb{E}[u] - \mathbb{E}[u^{hs}] \big\|_{L^2(D)}}{\big\| \mathbb{E}[u] \big\|_{L^2(D)}} \quad \text{and} \quad  e_{H^1}^{rel,1}(u,u^{hs}) =  \frac{\big\| \nabla \mathbb{E}[u] - \nabla \mathbb{E}[u^{hs}] \big\|_{L^2(D)}}{\big\| \nabla \mathbb{E}[u] \big\|_{L^2(D)}}.\]

Similarly, Table~\ref{table2} presents the corresponding relative errors for the second moments, evaluated as
\[ e_{L^2}^{rel,2}(u,u^{hs}) =  \frac{\big\| \mathbb{E}[{(u)}^2] - \mathbb{E}[{(u^{hs})}^2] \big\|_{L^2(D)}}{\big\| \mathbb{E}[{(u)}^2] \big\|_{L^2(D)}} \quad \text{and} \quad  e_{H^1}^{rel,2}(u,u^{hs}) = \frac{ \big\| \nabla \mathbb{E}[{(u)}^2] - \nabla \mathbb{E}[{(\widetilde{u}^{hs})}^2] \big\|_{L^2(D)}}{ \big\| \nabla \mathbb{E}[{(\widetilde{u})}^2] \big\|_{L^2(D)}}.\]

\begin{table}[!h]
		\centering
\caption{ Relative errors of expected values with $s = (e - e^{-1})/2^{4}$ } 
		\begin{tabular}{ |c|c|c|c|c| } 
			\hline
			$h$		  & $e_{L^2}^{rel,1}(u,u^{hs})$ & order & $e_{H^1}^{rel,1}(u,u^{hs})$ & order \\ \hline
			$3/2^{2}$ &3.6004e-01 &      &4.7088e-01&       \\ \hline
			$3/2^{3}$ &7.6845e-02 &2.2281&2.3794e-01&0.9847 \\ \hline
			$3/2^{4}$ &2.1563e-02 &1.8333&1.2199e-01&0.9638 \\ \hline
			$3/2^{5}$ &5.9221e-03 &1.8644&6.1089e-02&0.9979 \\ 
			\hline
		   \end{tabular}
		   \label{table1}
	\end{table}
	\begin{table}[!h]
		\centering
		\caption{Relative errors of second moments with $s  = (e-e^{-1})/2^4$}
		\begin{tabular}{ |c|c|c|c|c| } 
			\hline
			$h$		  & $e_{L^2}^{rel,2}(u,u^{hs})$ & order   &  $e_{H^1}^{rel,2}(u,u^{hs})$  & order   \\ 
			\hline
			$3/2^{2}$ &8.0635e-01&	    &7.6605e-01&		 \\ \hline
			$3/2^{3}$ &2.2570e-01&1.8370&4.2420e-01&0.8527  \\ \hline
			$3/2^{4}$ &5.9037e-02&1.9347&2.1764e-01&0.9628 \\ \hline
			$3/2^{5}$ &1.6042e-02&1.8797&1.0952e-01&0.9907 \\ 
			\hline
		   \end{tabular}
		   \label{table2}
	\end{table}  

From Tables~\ref{table1} and \ref{table2}, we can see the convergence orders in the $H^1$-norm  are about 1, which is consistent with our theoretical results.
\begin{table}[!h]
		\centering
		\caption{Relative errors of expected values with $h = \frac{3}{2(e-e^{-1})}s$ }
		\begin{tabular}{ |c|c|c|c|c|c|c| } 
			\hline
			$s$ & $e_{L^2}^{rel,1}(u,u^{hs})$ & order & $e_{H^1}^{rel,1}(u,u^{hs})$ & order \\ 
			\hline
			$(e-e^{-1})/2^{0}$ &4.1212e-01&      &4.7822e-01&      \\ \hline
			$(e-e^{-1})/2^{1}$ &8.9613e-02&2.2013&2.3942e-01&0.9981\\ \hline
			$(e-e^{-1})/2^{2}$ &2.4148e-02&1.8918&1.2207e-01&0.9717\\ \hline
			$(e-e^{-1})/2^{3}$ &5.9221e-03&2.0277&6.1089e-02&0.9988\\ 
			\hline
		   \end{tabular}
		   \label{table3}
	\end{table}

Since the error in the probability space is not significant, it is difficult to observe the convergence order with respect to \( s \) when \( h \) is fixed, even at \( h = {2^{-5}} \). To mitigate this issue, we adopt a compromise by setting \( h = \frac{3}{2(e - e^{-1})} s \), aiming to better capture the convergence behavior with respect to \( s \).

\begin{figure}[htbp]
    \centering
    \begin{subfigure}{0.32\textwidth}
        \includegraphics[width=\linewidth]{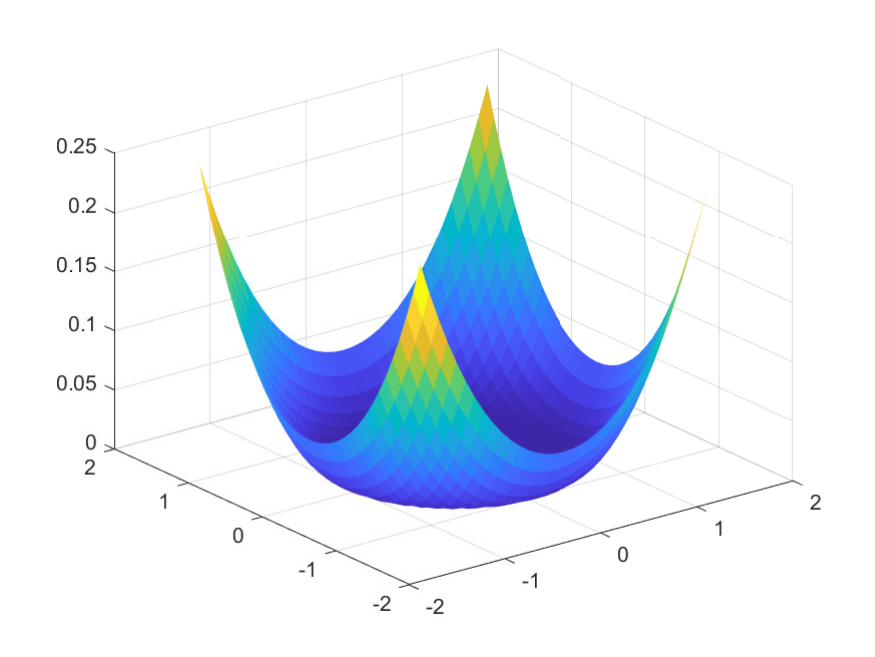}
        \caption{Expectation of the Exact solution }
    \end{subfigure}
    \hfill
    \begin{subfigure}{0.32\textwidth}
        \includegraphics[width=\linewidth]{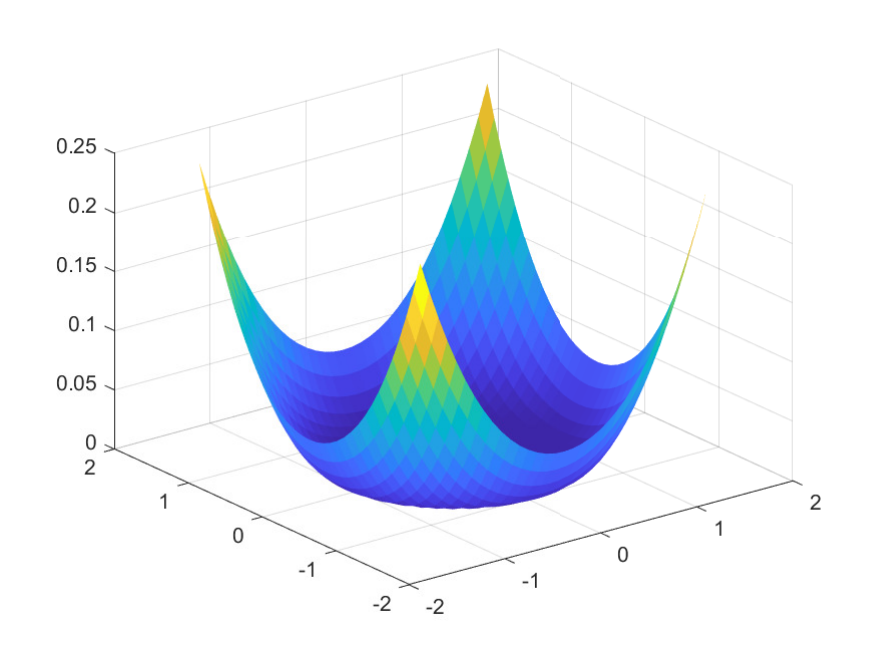}
        \caption{ Expectation of the SG solution}
    \end{subfigure}
    \hfill
    \begin{subfigure}{0.32\textwidth}
        \includegraphics[width=\linewidth]{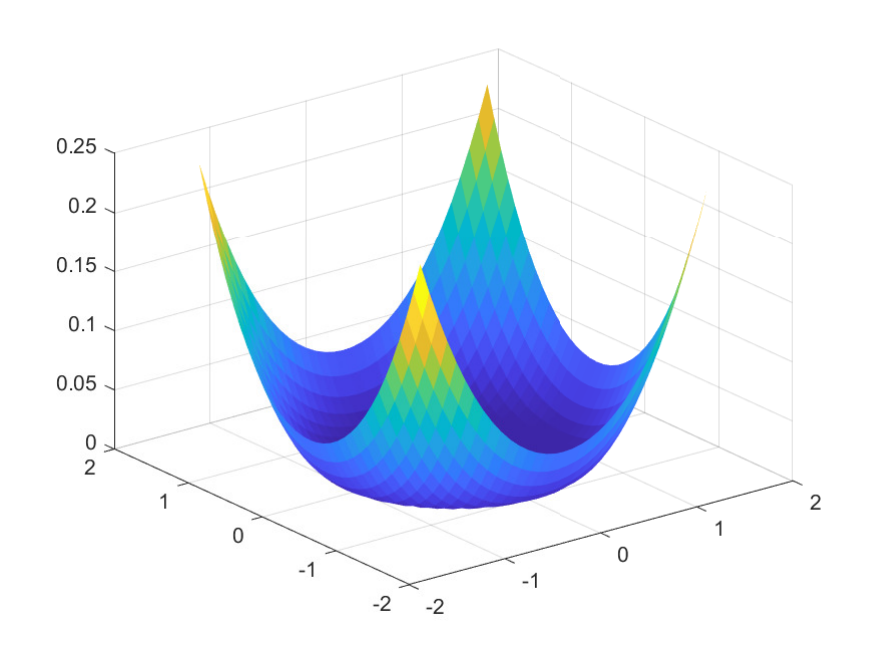}
        \caption{Mean of the MC solution \quad \quad \qquad \quad \quad \qquad   }
    \end{subfigure}

    \vspace{1em}

    \begin{subfigure}{0.32\textwidth}
        \includegraphics[width=\linewidth]{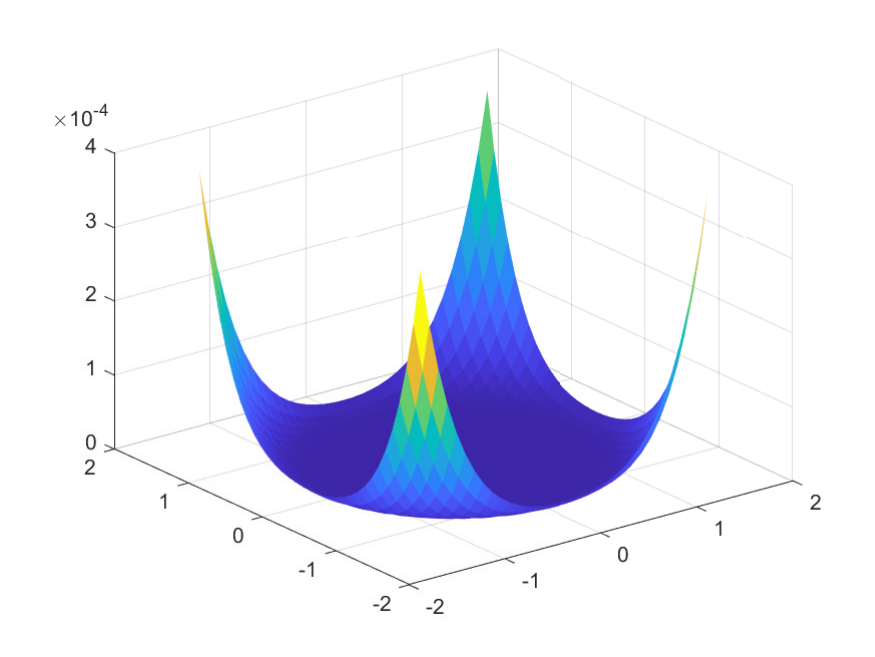}
        \caption{ Variance of the Exact solution  }
    \end{subfigure}
    \hfill
    \begin{subfigure}{0.32\textwidth}
        \includegraphics[width=\linewidth]{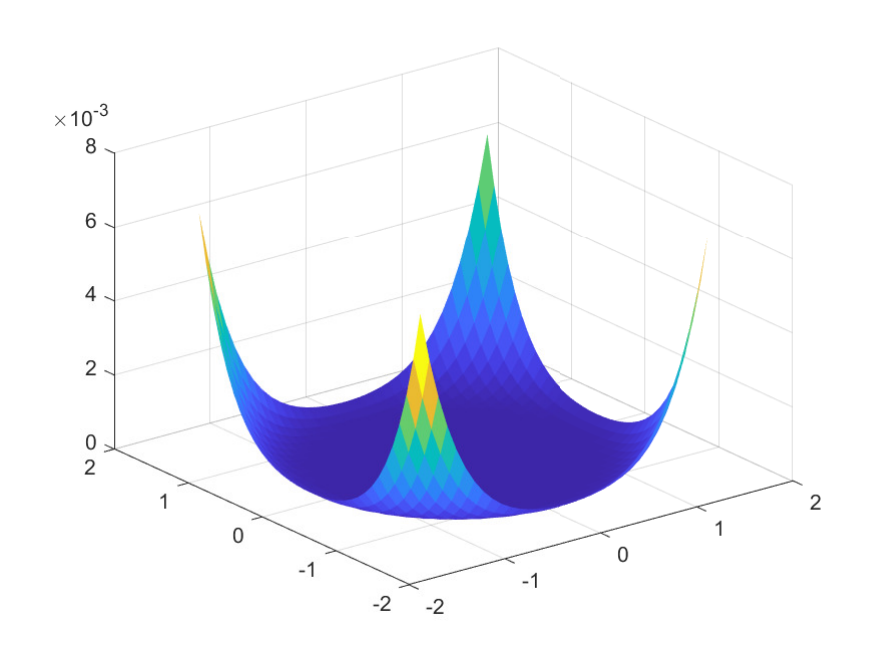}
        \caption{Variance of the SG solution  }
    \end{subfigure}
    \hfill
    \begin{subfigure}{0.32\textwidth}
        \includegraphics[width=\linewidth]{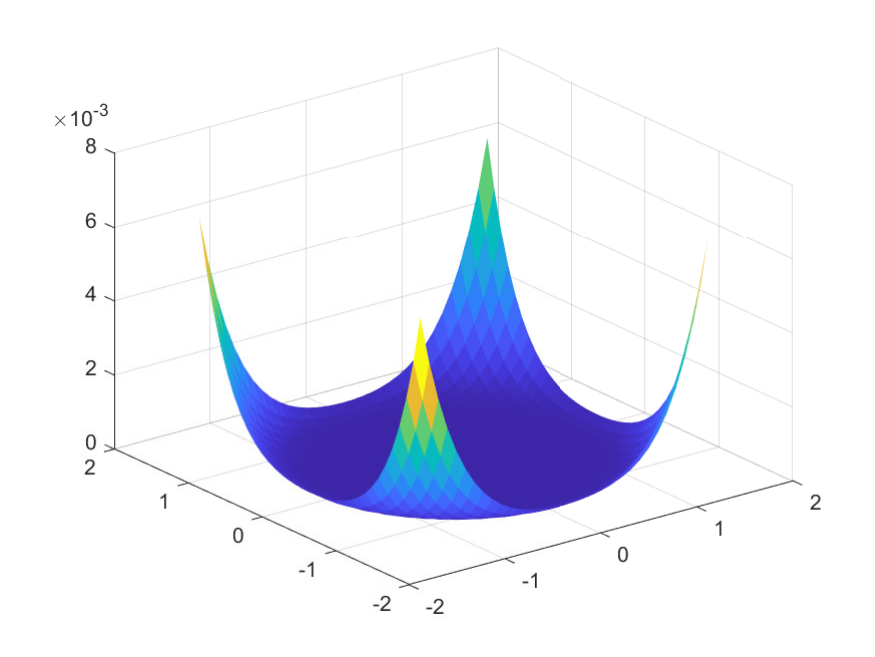}
        \caption{Variance of the MC solution}
    \end{subfigure}

    \caption{Comparison of statistical properties: expectation and variance of the exact solution, SG solution, and MC solution.}
	\label{fig1}
\end{figure}

\begin{figure}[htbp]
    \centering
    \begin{subfigure}{0.45\textwidth}
        \includegraphics[width=\linewidth]{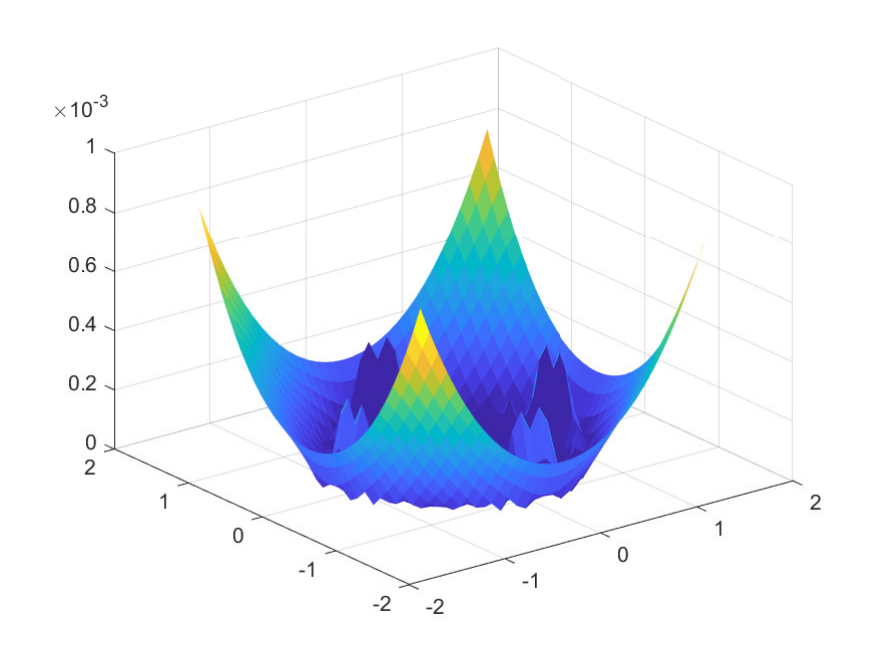}
        \caption{Error between expectation of exact  solution and SG solution}
    \end{subfigure}
    \hfill
    \begin{subfigure}{0.45\textwidth}
        \includegraphics[width=\linewidth]{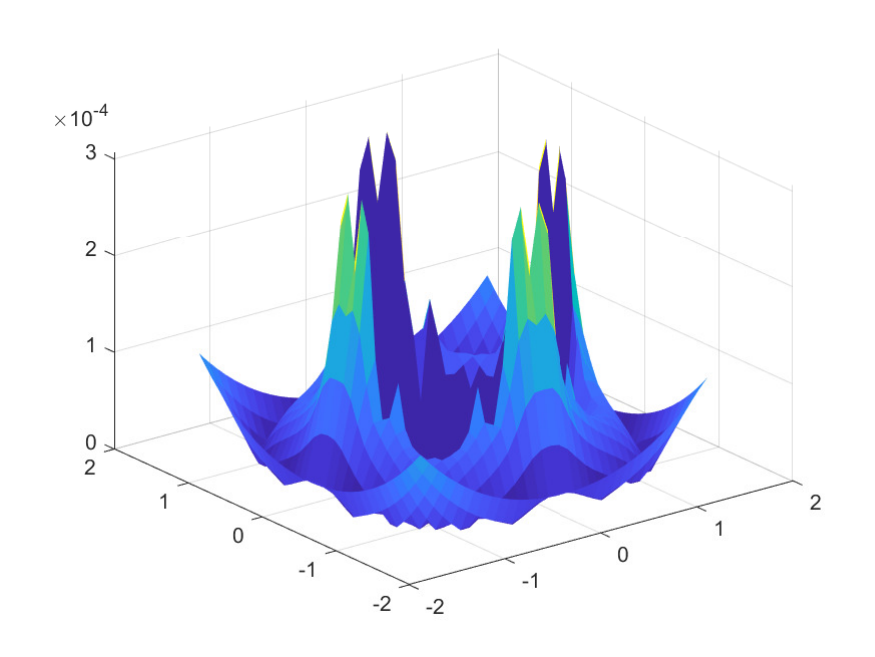}
        \caption{Error between expectation of exact  solution and MC solution}
    \end{subfigure}
    \caption{Absolute error in the expectation: comparison between the SG and MC solutions with respect to the exact solution.}
	\label{fig2}
\end{figure}

Figure~\ref{fig1} demonstrates the high accuracy of the SG method, which also exhibits superior computational efficiency, completing in just 10.49 seconds compared to the MC method's  344.56 seconds for processing $2^{15}$ samples. However, as shown in Figure \ref{fig2}, while the MC solution outperforms the SG solution, it does so at the cost of increased computational resource consumption.

\begin{example}\label{exm2}
	Let $ D=(-1,1)^2$  and 
	\[a(x_1,x_2, \omega) = 1,
	\]
\[
	{f}(x_1,x_2, \omega) = \begin{cases}
		8r^2(x_1^2+x_2^2-1-r^2)*(\exp(\xi_1)+2\exp(\xi_2)),& x_1^2+x_2^2 \leq r^2,\\
		-8(2x_1^2+2x_2^2-r^2)*(\exp(\xi_1)+2\exp(\xi_2)), & x_1^2+x_2^2>r^2,
	\end{cases}
	\]
for $\xi_1,\xi_2 \thicksim \text{i.i.d.}\, \mathrm{U}(- {1}, {1})$ , $g = 0$, and $r = 0.7$.  The exact solution is 
\[
	u(x,y,\omega) = \begin{cases}
		0, & x_1^2+x_2^2 \leq r^2,\\
		(x_1^2+x_2^2-r^2)^2*(\exp(\xi_1)+2\exp(\xi_2)),& x_1^2+x_2^2>r^2.
	\end{cases} 
\]
\end{example}

\begin{table}[!h]
	\centering
	\caption{Relative errors of expected values with $s = (e - e^{-1})/2^3 $}
	\begin{tabular}{ |c|c|c|c|c| } 
		\hline
		$h$		  & $e_{L^2}^{rel,1}(u,u^{hs})$ & order & $e_{H^1}^{rel,1}(u,u^{hs})$ & order \\ \hline
		$1/2^{2}$ &1.3864e-01 &       &3.2220e-01&       \\ \hline
		$1/2^{3}$ &3.5350e-02 &1.9716 &1.6350e-01&0.9786 \\ \hline
		$1/2^{4}$ &8.9860e-03 &1.9760 &8.2092e-02&0.9940 \\ \hline
		$1/2^{5}$ &2.2315e-03 &2.0096 &4.1091e-02&0.9984 \\ \hline
	   \end{tabular}
	   \label{table4}
\end{table}
\begin{table}[!h]
	\centering
	\caption{Relative errors of second moments with $s = (e - e^{-1})/2^3 $}
	\begin{tabular}{ |c|c|c|c|c|} 
		\hline
		$h$		  & $e_{L^2}^{rel,2}(u,u^{hs})$ & order   &  $e_{H^1}^{rel,2}(u,u^{hs})$  & order   \\  \hline
		$1/2^{2}$ &4.1182e-01&		&6.0197e-01&\\ \hline
		$1/2^{3}$ &1.0932e-01&1.9134 &3.1815e-01&0.9200 \\ \hline
		$1/2^{4}$ &2.7766e-02&1.9772 &1.6141e-01&0.9789 \\ \hline
		$1/2^{5}$ &6.9689e-03&1.9944 &8.1007e-02 &0.9947 \\ \hline
	   \end{tabular}  
	   \label{table5} 
\end{table}

\begin{table}[!h]
	\centering
	\caption{Relative errors of expected values with  $h = s/(e-e^{-1})$}
	\begin{tabular}{ |c|c|c|c|c|} 
		\hline
		$s$		  & $e_{L^2}^{rel,1}(u,u^{hs})$ & order & $e_{H^1}^{rel,1}(u,u^{hs})$ & order \\ \hline
		$(e-e^{-1})/2^{1}$ &5.4709e-01 &		 &6.0731e-01 &        \\ \hline
		$(e-e^{-1})/2^{2}$ &1.3864e-01 &1.9804&3.2220e-01 &0.9145  \\ \hline
		$(e-e^{-1})/2^{3}$ &3.5350e-02 &1.9716&1.6350e-01 &0.9786  \\ \hline
		$(e-e^{-1})/2^{4}$ &8.9860e-03 &1.9760&8.2092e-02 &0.9940\\ \hline
	   \end{tabular}
	   \label{table6}
\end{table}         
From Tables~\ref{table4} and~\ref{table5}, we observe that the convergence orders in the $H^1$-norm are approximately 1, which aligns well with the theoretical result given in Theorem~\ref{thm4.3}.

\begin{figure}[htbp]
    \centering
    \begin{subfigure}{0.32\textwidth}
        \includegraphics[width=\linewidth]{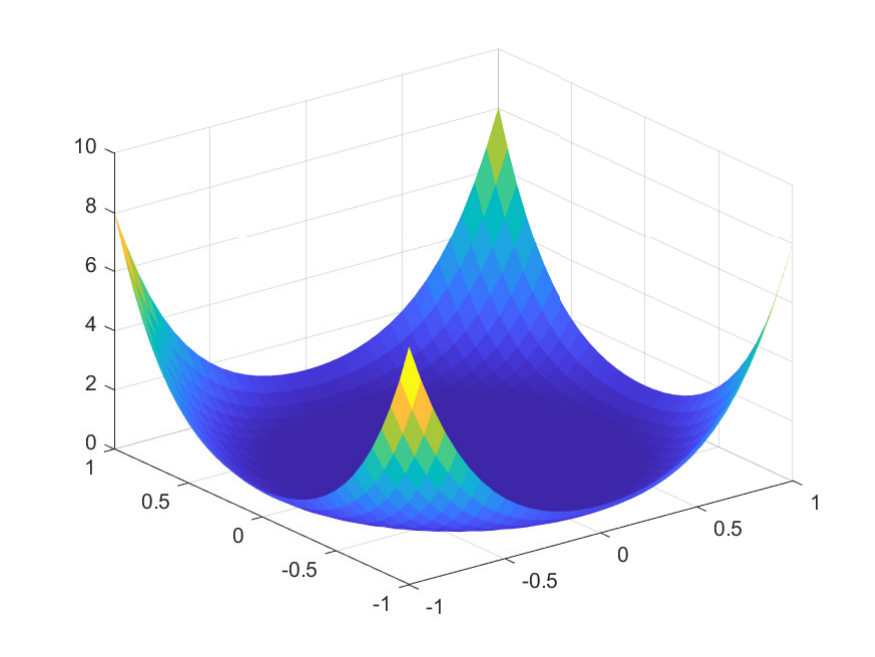}
        \caption{Expectation of the Exact solution }
    \end{subfigure}
    \hfill
    \begin{subfigure}{0.32\textwidth}
        \includegraphics[width=\linewidth]{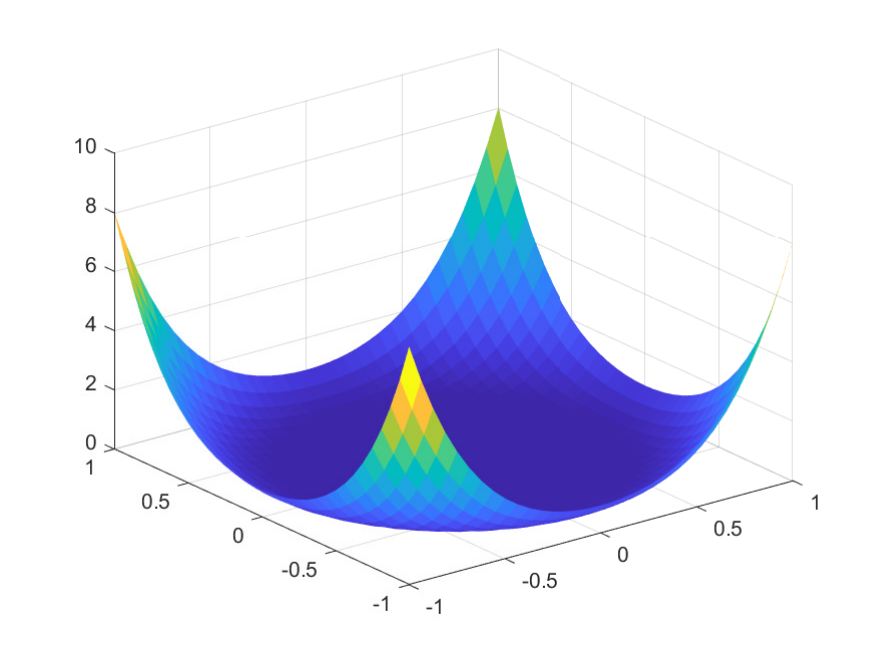}
        \caption{ Expectation of the SG solution}
    \end{subfigure}
    \hfill
    \begin{subfigure}{0.32\textwidth}
        \includegraphics[width=\linewidth]{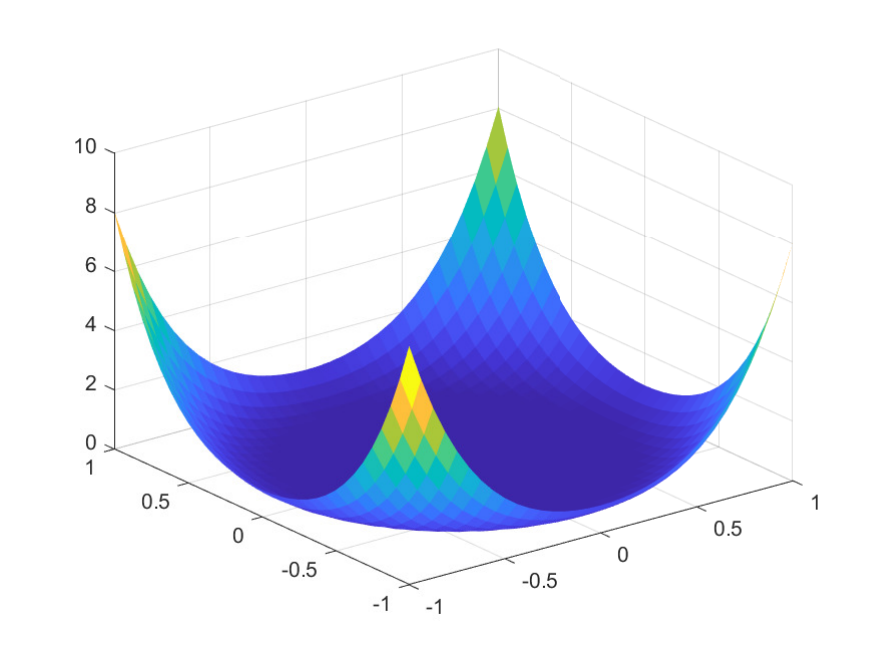}
        \caption{Mean of the MC solution \quad \quad \qquad \quad \quad \qquad   }
    \end{subfigure}

    \vspace{1em}

    \begin{subfigure}{0.32\textwidth}
        \includegraphics[width=\linewidth]{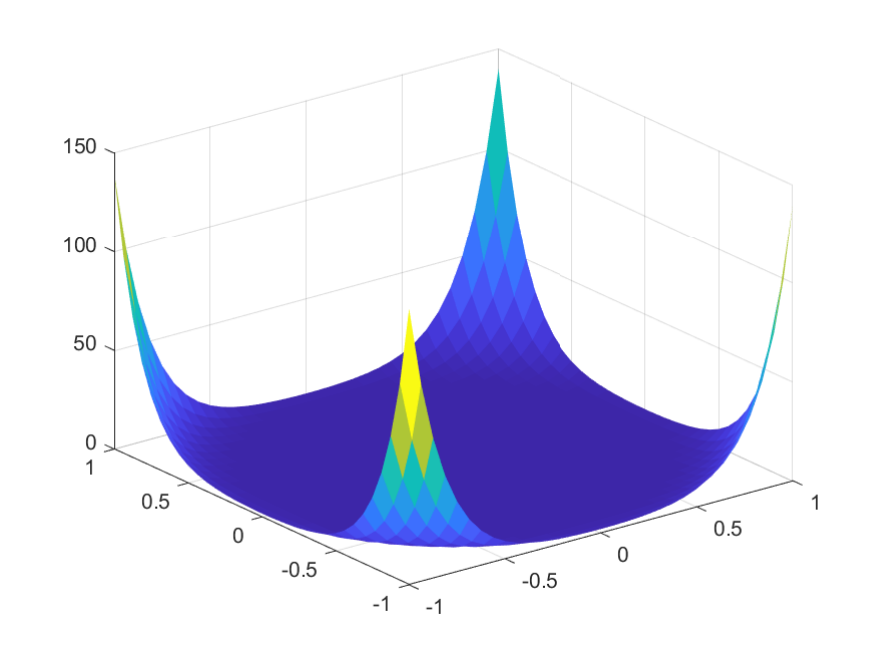}
        \caption{ Variance of the Exact solution  }
    \end{subfigure}
    \hfill
    \begin{subfigure}{0.32\textwidth}
        \includegraphics[width=\linewidth]{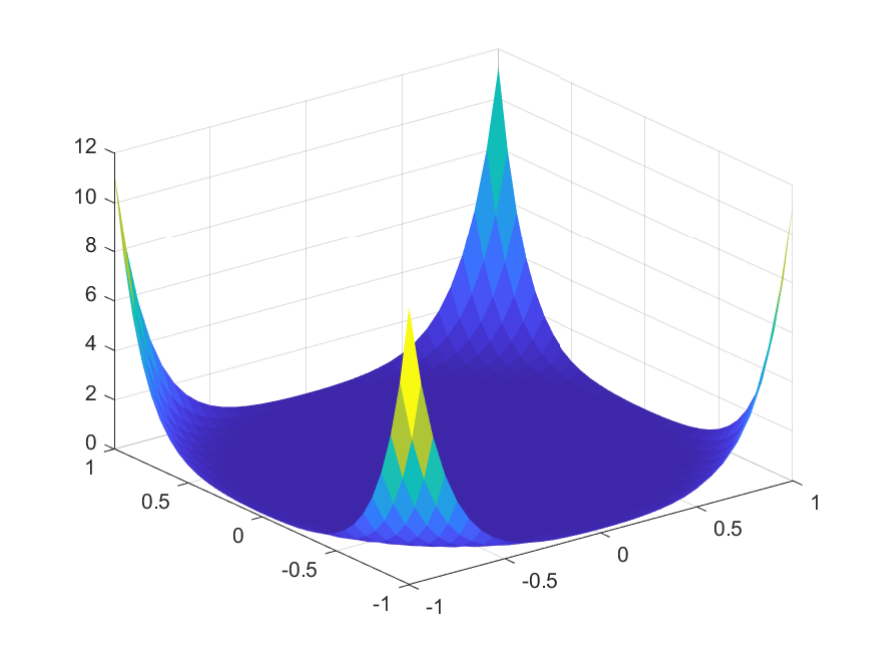}
        \caption{Variance of the SG solution  }
    \end{subfigure}
    \hfill
    \begin{subfigure}{0.32\textwidth}
        \includegraphics[width=\linewidth]{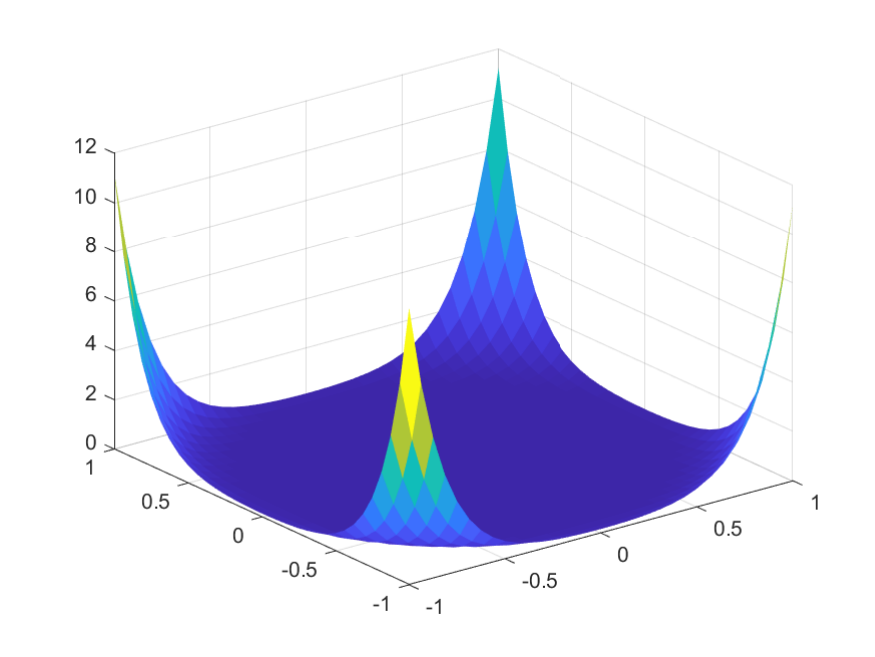}
        \caption{Variance of the MC solution}
    \end{subfigure}

    \caption{Comparison of statistical properties: expectation and variance of the exact solution, SG solution, and MC solution.}
	\label{fig3}
\end{figure}

\begin{figure}[htbp]
    \centering
    \begin{subfigure}{0.45\textwidth}
        \includegraphics[width=\linewidth]{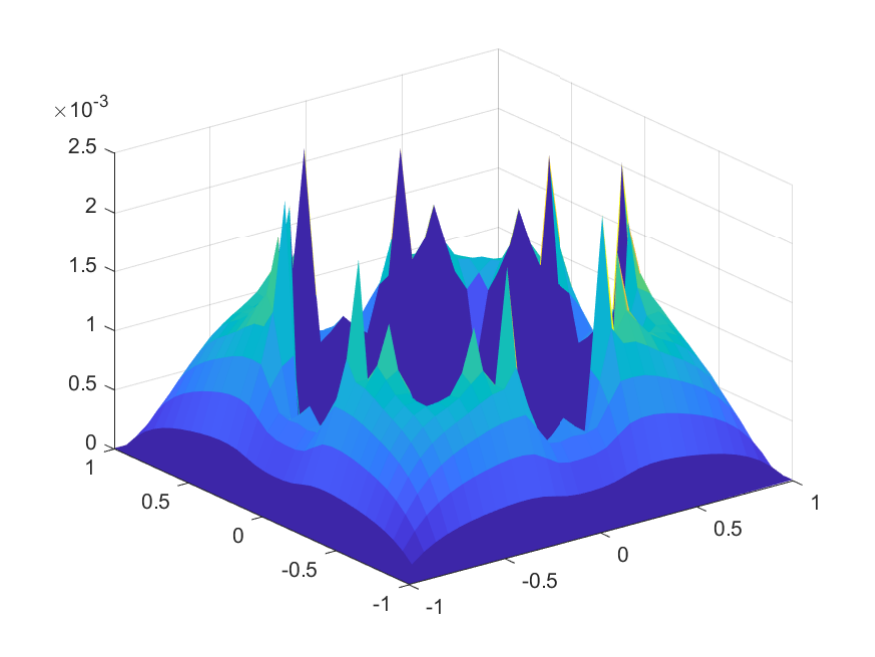}
        \caption{Error between expectation of exact  solution and SG solution}
    \end{subfigure}
    \hfill
    \begin{subfigure}{0.45\textwidth}
        \includegraphics[width=\linewidth]{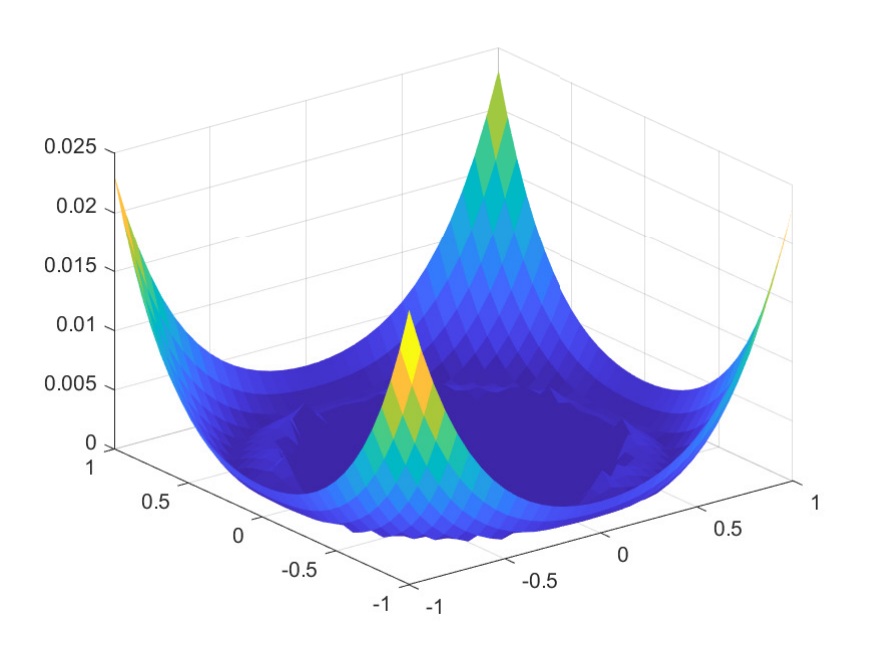}
        \caption{Error between expectation of exact  solution and MC solution}
    \end{subfigure}
    \caption{Absolute error in the expectation: comparison between the SG and MC solutions with respect to the exact solution.}
	\label{fig4}
\end{figure}

The SG method not only achieves high accuracy, as demonstrated in Figures~\ref{fig3} and~\ref{fig4}, but also exhibits superior computational efficiency, completing in just  9.82 seconds. In contrast, the MC method requires 349.29 seconds to process $2^{15}$ samples, yet it still falls short of the true solution. Consequently, the MC method would need significantly more samples to approximate the true solution, thereby increasing its computational demands.

Our numerical experiments demonstrate that both the expectation error and the second moment error converge at an \(O(h)\) rate in the \(H^1\) -norm. 

\section{Conclusion}

In this study, we developed stochastic Galerkin formulations to solve stochastic obstacle problems and established the well-posedness of the discrete formulation. Our numerical analysis demonstrated the effectiveness of the SG method, providing an optimal error estimate for the linear element in the \(H^1\)-norm within the physical space. Numerical experiments validated these theoretical findings, showing that the expectation error and second moment error of the solution converge at a rate of \(O(h)\) in the \(H^1\)-norm, aligning with the derived estimates.

{This approach can be extended to more complex stochastic problems, including time-dependent variational inequalities, while adaptive strategies will be designed to refine the stochastic discretization dynamically based on error indicators, focusing computational effort on regions of high variability. We will also explore the comparative advantages of stochastic collocation methods against the SG approach in handling diverse randomness and solution structures. To address the challenges of high-dimensional random spaces, sparse grids, low-rank tensor approximations, and reduced-order modeling will be employed to mitigate the curse of dimensionality and enhance efficiency. Finally, to improve accuracy in problems with limited regularity, we aim to develop specialized finite element spaces or enriched basis functions that better capture solution features.}

\end{document}